\theoremstyle{plain}
\newtheorem{theorem}{Theorem}[section]
\newtheorem{lemma}[theorem]{Lemma}
\theoremstyle{definition}
\theoremstyle{remark}
\newtheorem{remark}[theorem]{Remark}
\numberwithin{equation}{section} 
\numberwithin{figure}{section}   
\newcommand{\field}[1]{\mathbb{#1}}
\newcommand{\nR}{\field{R}}
\newcommand{\pd}[2]{\frac{\partial #1}{\partial #2}}
\renewcommand{\abs}[1]{\left\lvert#1\right\rvert}
\newcommand{\set}[1]{\left\{#1\right\}}
\renewcommand{\ip}[2]{\left<#1,#2\right>}
\renewcommand{\norm}[1]{\left\|#1\right\|}
\newcommand{\normL}[1]{\|#1\|_{L^2}}
\newcommand{\normH}[1]{\|#1\|_{H^1}}
\newcommand{\fournorm}[1]{\|#1\|_{L^4}}
\newcounter{my_counter}
\renewcommand{\grad}{\nabla}
\renewcommand{\div}{\nabla \cdot} 
\newcommand{\lap}{\Delta} 
\renewcommand{\braket}[1]{\left<#1\right>}
\title[Sparse-in-time data assimilation]{
The second-best way to do\\sparse-in-time continuous data assimilation:\\ Improving convergence rates\\for the 2D and 3D Navier-Stokes Equations
}
\date{\today}
\author{Adam Larios}
\address[Adam Larios]{Department of Mathematics, 
                University of Nebraska--Lincoln,
        Lincoln, NE 68588-0130, USA}
\email[Adam Larios]{alarios@unl.edu}
\author{Yuan Pei}
\address[Yuan Pei]{Department of Mathematics, 
                Western Washington University,
        Bellingham, WA 98225-9063, USA}
\email[Yuan Pei]{peiy@wwu.edu}
\author{Collin Victor}
\address[Collin Victor]{Department of Mathematics, 
                University of Nebraska--Lincoln,
        Lincoln, NE 68588-0130, USA}
\email[Collin Victor]{collin.victor@huskers.unl.edu}
\begin{document}

\begin{abstract}
We study different approaches to implementing sparse-in-time observations into the the Azouani-Olson-Titi data assimilation algorithm.  We propose a new method which introduces a ``data assimilation window'' separate from the observational time interval.  We show that by making this window as small as possible, we can drastically increase the strength of the nudging parameter without losing stability. Previous methods used old data to nudge the solution until a new observation was made.  In contrast, our method stops nudging the system almost immediately after an observation is made, allowing the system relax to the correct physics.  We show that this leads to an order-of-magnitude improvement in the time to convergence in our 3D Navier-Stokes simulations.  Moreover, our simulations indicate that our approach converges at nearly the same rate as the idealized method of direct replacement of low Fourier modes proposed by Hayden, Olson, and Titi (HOT). However, our approach can be readily adapted to non-idealized settings, such as finite element methods, finite difference methods, etc., since there is no need to access Fourier modes as our method works for general interpolants. It is in this sense that we think of our approach as ``second best;'' that is, the ``best'' method would be the direct replacement of Fourier modes as in HOT, but this idealized approach is typically not feasible in physically realistic settings.  While our method has a convergence rate that is slightly sub-optimal compared to the idealized method, it is directly compatible with real-world applications.

Moreover, we prove analytically that these new algorithms are globally well-posed, and converge to the true solution exponentially fast in time.  
In addition, we provide the first 3D computational validation of HOT algorithm.
\end{abstract}

\keywords{
Continuous Data Assimilation, Azouani-Olson-Titi, Navier-Stokes Equations, Sparse-in-time Observations
}

\subjclass[2010]{Primary:
34D06, 
35Q30, 
35Q35, 
37C50; 
Secondary: 35Q93} 

\maketitle
\thispagestyle{empty}

\noindent

\section{Introduction}

\noindent
If one wishes to incorporate infrequent observational data into a physical model (e.g., a model of the weather), an immediate question arises: \textit{What should one do to the model during the times when there is no data?}  If one is using some kind of data assimilation scheme, a natural answer is that the correction from the data assimilation should use the most recent data to continue to steer the simulation toward the observed data.  Indeed, this is precisely what was proposed and studied in \cite{Foias_Mondaini_Titi_2016}.  However, in the present work, we show that another approach appears to work even better.  Roughly speaking, our answer to the above question is: \textit{Do nothing}.  More precisely, we show computationally in 3D simulations of the Navier-Stokes equations using Azouani-Olson-Titi (AOT)-style data assimilation that a very brief but very hard nudging of the solution toward the most recent interpolated data, and then a subsequent period of letting the solution relax toward the correct physics, can lead to an order-of-magnitude improvement in the time to convergence.  


One problem that arises in the modeling of dynamical systems of mathematical physics is that of initialization. That is, if one could evolve the system forward in time perfectly, one would still require an accurate initial state to evolve forward from. \emph{Data assimilation} is a class of techniques that mitigates this problem by incorporating data from the dynamical system with the underlying physical model in order to more accurately simulate the dynamics. A new approach to data assimilation has emerged recently, known as the Azouani-Olson-Titi (AOT) algorithm \cite{Azouani_Titi_2014,Azouani_Olson_Titi_2014}. Since its inception, the AOT algorithm has been the subject of much recent study in both analytical studies  
\cite{
Albanez_Nussenzveig_Lopes_Titi_2016,
Bessaih_Olson_Titi_2015,
Biswas_Bradshaw_Jolly_2020,
Biswas_Foias_Mondaini_Titi_2018downscaling,
Biswas_Martinez_2017,
Biswas_Price_2020_AOT3D,
Carlson_Hudson_Larios_2018,
Carlson_Larios_2021_sens,
Chen_Li_Lunasin_2021,
Celik_Olson_2023,
Diegel_Rebholz_2021,
Du_Shiue2021,
Farhat_Jolly_Titi_2015,
Farhat_Lunasin_Titi_2016abridged,
Farhat_Lunasin_Titi_2016benard,
Farhat_Lunasin_Titi_2016_Charney,
Farhat_Lunasin_Titi_2017_Horizontal,
Foias_Mondaini_Titi_2016,
Foyash_Dzholli_Kravchenko_Titi_2014,
GarciaArchilla_Novo_Titi_2018,
GarciaArchilla_Novo_2020,
Gardner_Larios_Rebholz_Vargun_Zerfas_2020_VVDA,
GlattHoltz_Kukavica_Vicol_2014,
Ibdah_Mondaini_Titi_2018uniform,
Jolly_Martinez_Olson_Titi_2018_blurred_SQG,
Jolly_Martinez_Titi_2017,
Jolly_Sadigov_Titi_2015,
Larios_Pei_2018_NSV_DA,
Larios_Victor_2021_chiVsdelta2D,
Markowich_Titi_Trabelsi_2016_Darcy,
Mondaini_Titi_2018_SIAM_NA,
Pachev_Whitehead_McQuarrie_2021concurrent,
Pei_2019,
Rebholz_Zerfas_2018_alg_nudge,
Zerfas_Rebholz_Schneier_Iliescu_2019} 
and computational studies 
\cite{
Altaf_Titi_Knio_Zhao_Mc_Cabe_Hoteit_2015,
Carlson_VanRoekel_Petersen_Godinez_Larios_2021,
DiLeoni_Clark_Mazzino_Biferale_2018_inferring,
Desamsetti_Dasari_Langodan_Knio_Hoteit_Titi_2019_WRF,
Franz_Larios_Victor_2021,
Gesho_Olson_Titi_2015,
Larios_Rebholz_Zerfas_2018,
Larios_Victor_2019,
Lunasin_Titi_2015,
Larios_Pei_2017_KSE_DA_NL}.

In this work we examine a modification to the AOT algorithm using discrete in time observational data, similar to \cite{Foias_Mondaini_Titi_2016}. We note that the feedback-control term introduced in \cite{Foias_Mondaini_Titi_2016} is constant in between observations, and yet the feedback is applied until new observational data is generated. In practice this means that the feedback term is forcing the solution towards an old state long after that state has passed. For highly turbulent flows, after observations are gathered, the observational data quickly becomes outdated, so the nudging term becomes increasingly out-of-sync with the present state of the flow. Instead, we propose a modification to this scheme that introduces a new parameter $\tau$, denoting the length of the assimilation window, a time interval within which observational data is assimilated into the model equation.  When this assimilation window has passed, the observational data is discarded and the system is allowed to evolve without outside feedback control until new observations are made.

We show analytically that our proposed data assimilation scheme is well-posed and exhibits exponential convergence to the true solution, following similar methods to \cite{Foias_Mondaini_Titi_2016}. Moreover, we prove that the rate of convergence is comparable to that of the algorithm given in \cite{Foias_Mondaini_Titi_2016} for certain choices of parameters. Additionally, we extend our analysis to consider the case of certain explicit time-extrapolants to show global well-posedness and exponential convergence to the true solution.
We find computationally, that our algorithm allows for greater flexibility with the parameters. Utilizing larger values of the feedback-control parameter $\mu$ we obtain an order-of-magnitude improvement on the convergence time to the true solution when our algorithm is applied to the 3D incompressible Navier-Stokes equations.

The paper is organized as follows. In Section 2 we outline the functional setting of the Navier-Stokes equations, the formulation of the data assimilation equations, as well as standard results we will use in our analysis. In Section 3 we prove analytical results regarding both the well-posedness and the convergence for our assimilated equations. In Section 4 we discuss computational results for this algorithm when applied to the 3D Navier-Stokes equations. 

\begin{remark}
    During the final preparations of this manuscript, we became aware of the recent work \cite{Hammoud_LeMaitre_Titi_Hoteit_Knio_2023_DDS}, which proposes and computationally investigates an approach that the authors call Discrete Data Assimilation (DDA), which is quite similar to the approach presented here, in the context of the 2D Rayleigh-B\'enard convection system.  The present work differs from \cite{Hammoud_LeMaitre_Titi_Hoteit_Knio_2023_DDS} in that we provide analysis proving that our scheme is globally well-posed and converges, we provide analytical bounds for the parameters, 
    and finally our simulations are in the context of the 3D Navier-Stokes equations.
\end{remark}

\section{Preliminaries}
In this article, we study the following incompressible Navier-Stokes equations in two spatial dimensions:
\begin{subequations}
\begin{empheq}[left=\empheqlbrace]{align}  
  \label{NSE}
    u_t + u\cdot \grad u - \nu \lap u &= \grad p + f,\\
    \div u &= 0.
\end{empheq}
\end{subequations}
Here $u(x,t) = (u_1(x_1, x_2,t),u_2(x_1, x_2,t))$ is the velocity, $\nu>0$ is the viscosity, $p$ is the pressure and $f$ is a forcing term. Applying the continuous data assimilation algorithm to this equation yields the following equations:
\begin{subequations}
\begin{empheq}[left=\empheqlbrace]{align}  
  \label{NSE-AOT}
    v_t + v\cdot \grad v - \nu \lap v &= \grad p + f + \mu I_{h,\kappa}(u-v),\\
    \div v &= 0.
\end{empheq}
\end{subequations}

Here $I_{h,\kappa}$ is a linear interpolation term with associated spatial length scale $h$ and time scale $\kappa$.
Inspired by \cite{Foias_Mondaini_Titi_2016}, in this paper, we propose the following data assimilation algorithm:
\begin{subequations}
\begin{empheq}[left=\empheqlbrace]{align}  
  \label{NSE-v}
    v_t + v\cdot \grad v - \nu \lap v &= \grad p + f + \sum_{n=0}^\infty \mu I_{h,\kappa}[u-v]\chi_{n,\tau},\\
    \div v &= 0.
\end{empheq}
\end{subequations}
Here $\chi_{n,\tau}$ is the indicator function of the time interval $[t_n, t_n + \tau)$, where $t_{n+1} - t_n = \kappa \geq \tau$. Setting $\kappa=\tau$, one recovers the algorithm in \cite{Foias_Mondaini_Titi_2016}, while here we allow $\tau < \kappa$.

We require that the above interpolator $I_{h,\kappa}$ satisfies that following condition:
\begin{align}
  \label{interpolator_time}
      \norm{\xi(t) - I_{h,\kappa}\xi(t)}^2_{L^2} &\leq  c_0h^2\norm{\xi(t)}_{H^1}^2 + c_1(t-t_n) \int_{t_n}^t\norm{\dv{\xi(s)}{s}}_{L^2}^2 ds, \quad \text{ for all } \xi \in H^1.
\end{align}
Above we have $\kappa = t_{n+1} - t_{n}$, so we can use $t-t_n \leq \kappa$ for an upper bound. 
However we typically use $t-t_n \leq \tau$ for restricted values of $t$ as an upper bound in \ref{interpolator_time} as the indicator function in \cref{NSE-v} will remove the interpolant term for larger $t$ values.
Here and throughout this work we make reference to $t_n$, which denotes the times in which observations of \cref{NSE} are made with frequency $\kappa$ with the additional note that $t_0$ coincides that given in \Cref{thm1}.
We note that above we have opted to use any interpolant satisfying \cref{interpolator_time}, which corresponds to interpolants with a zero order explicit time-extrapolation e.g. piecewise-constant  time-extrapolation.
One can also consider higher order time-extrapolants that satisfy a similar inequality with a higher order time-derivative component.

\begin{remark}
    Note that in some real-world applications, the time between observations, $\kappa>0$, may not be fixed.  For the sake of simplicity, we assume that $\kappa$ is fixed in the present work.  However, our results could be readily adapted to allow for time-varying $\kappa$ because we only use the more general bound \cref{interpolator_time} rather than assuming, e.g., a fixed piece-wise constant time-extrapolation.  In principle, one could allow $I_{h,\kappa}$ to depend on $t_n$.
\end{remark}

\subsection{Functional Setting of Navier-Stokes Equations}
Throughout this paper, we denote the Lebesgue space and the Sobolev space by $L^p$ for $0\leq p \leq \infty$ and $H^s = W^{s,2}$ with $s>0$, respectively. Let $\mathcal V$ be the set of all $L$-periodic trigonometric polynomials form $\nR^2$ to $\nR^2$ that are divergence free with zero average. We denote by $H$, $V$, and $V'$ the closures of $\mathcal V$ in the $L^2(\Omega)^2$ and $H^1(\Omega)^2$ norms, and the dual space of $V$, respectively, with inner products on $H$ and $V$ as
\begin{align} (u, v) = \sum_{i=1}^2\int_{\mathbb{T}^2}u_{i}v_{i}\,dx \text{ \,\,and\,\,  } (\nabla u, \nabla v) = \sum_{i, j=1}^2\int_{\mathbb{T}^2}\partial_{j}u_{i}\partial_{j}v_{i}\,dx, \end{align}
respectively, associated with the norms $\| u \|_{H}=(u, u)^{1/2}$ and $\| u \|_{V}=(\nabla u, \nabla u)^{1/2}$, where $u(x)= u(x_1, x_2) = (u_1(x_1, x_2), u_2(x_1, x_2))$ and $v(x) = v(x_1, x_2)=(v_1(x_1, x_2), v_2(x_1, x_2))$. 
For the sake of convenience, we use $\normL{u}$ and $\normH{u}$ to denote the above norms in $H$ and $V$, respectively.

Applying the orthogonal Leray-Helmholtz projector, $P_\sigma$, to \cref{NSE,NSE-v} one formally obtains the following equations for $u$ and $v$, respectively:
\begin{align}
    \label{NSE-functional}
    u_t  + \nu Au +  B(u,u)  &= f,\\
\label{NSE-AOT-functional}
    v_t  + \nu Av +  B(v,v)  &= f + \sum_{n=0}^\infty \mu P_\sigma I_{h,\kappa}[u-v]\chi_{n,\tau}.
\end{align}
Here $A:=-P_\sigma\Delta$ and $B(u,v) := P_\sigma((u\cdot\nabla)v)$ which will be discussed in depth below.

The Stokes operator, $A$, has domain $\mathcal{D}(A):= H^2\cap V$ and can be extended to a linear operator from $V$ to $V'$ as
\begin{align} \left<Au, v\right> = (\nabla u, \nabla v) \text{  for all  } v\in V. \end{align}
Notice that on our domain $\Omega = \mathbb{T}^2$, $A = -\Delta P_{\sigma}$.
It is well-known that 
$A$ has non-decreasing eigenvalues $\lambda_{k}$.
Due to this fact, the following Poincar\'e inqualities are valid:
\begin{align}\label{Poincare}
    \lambda_1\normL{u}^2
    \leq
    \normH{u}^2 \text{\,\, for\,\,} u\in V;\quad
    \lambda_1\normH{u}^2
    \leq
    \normL{Au}^2 \text{\,\, for\,\,} u\in D(A).
\end{align}
Thus, $\normL{u}$ is equivalent to $\normH{u}$. For a more in-depth look at the various properties of $A$ see e.g. \cite{Robinson_2001, Temam_1997_IDDS, Constantin_Foias_1988}.

In this paper we frequently utilize the following Ladyzhenskaya inequality 
\begin{align}\label{Ladyzhenskaya}
\fournorm{u}^2 \leq c\normL{u} \normH{u},
\quad\text{for all}\,\, u\in V,
\end{align}
which is a variation of the following interpolation result (see, e.g., \cite{Nirenberg_1959_AnnPisa} for a detailed proof).
Assume $1 \leq q, r \leq \infty$, and $0<\gamma<1$.  
For $v\in L^q(\mathbb{T}^{n})$, such that  $\partial^\alpha v\in L^{r} (\mathbb{T}^{n})$, for $|\alpha|=m$, then 
\begin{align}\label{Sobolev}
\|\partial_{s}v\|_{L^{p}} \leq C\|\partial^{\alpha}v\|_{L^{r}}^{\gamma}\| v\|_{L^{q}}^{1-\gamma},
\quad\text{where}\quad
\frac{1}{p} - \frac{s}{n} = \left(\frac{1}{r} - \frac{m}{n}\right) \gamma+ \frac{1}{q}(1-\gamma).
\end{align}

We note that the bilinear term 
\begin{align}
     B(u, v) := P_{\sigma}((u\cdot\nabla)v),\quad (u, v \in \mathcal{V}),
\end{align}
can be extended to a continuous map 
$B : V \times V \to V'$,
\begin{align}
     \left<B(u, v), w\right> = \int_{\mathbb{T}^2}(u\cdot\nabla v)\cdot w\,dx, \quad (u, v, w\in \mathcal{V}).
\end{align}
In this work we will frequently use properties of the bilinear term which are contained for conciseness in the following lemma (see e.g., \cite{Constantin_Foias_1988,Temam_2001_Th_Num,Foias_Manley_Rosa_Temam_2001} for details).
\begin{lemma}
    \begin{subequations}
      \begin{align}
      \label{symm1}
      \ip{B(u,v)}{w}_{V'} &= -\ip{B(u,w)}{v}_{V'}, 
      \quad\text{ for all }\;u, v, w\in V,\\
      \label{symm2}
      \ip{B(u,v)}{v}_{V'} &= 0,
      \quad\text{ for all }\;u, v, w\in V.
    \end{align}
  \end{subequations}
  \begin{subequations}
    \begin{align}
      \label{B:424}
      |\ip{B(u,v)}{w}_{V'}|
      &\leq 
      C\normL{u}^{1/2} \normH{u}^{1/2} \normH{v} \normL{w}^{1/2} \normH{w}^{1/2},
      \\
      \label{B:442}
      |\ip{B(u,v)}{w}_{V'}|
      &\leq 
      C\normL{u}^{1/2} \normH{u}^{1/2} \normH{u}^{1/2} \normL{Av}^{1/2} \normL{w},
      \\
      \label{B:inf22}
      |\ip{B(u,v)}{w}_{V'}|
      &\leq 
      C\normL{u}^{1/2} \normL{Au}^{1/2} \normH{v} \normL{w},
    \end{align}
for all $u$, $v$, $w$ in the largest spaces $H$, $V$, or $D(A)$, for which the right-hand sides of the inequalities are finite.
  \end{subequations}
  \begin{align}\label{enstrophy_miracle}
    \ip{B(w, w)}{Aw} = 0,\; \text{ for all } w\in D(A),
\end{align}
  and the Jacobi identity   
  \begin{align}\label{jacobi}
    \ip{B(u, w)}{Aw} + \ip{B(w, u)}{Aw} +\ip{B(w, w)}{Au} = 0.
  \end{align}
\end{lemma}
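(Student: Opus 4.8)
The plan is to treat the identities and estimates separately, using only integration by parts (with no boundary contributions on $\mathbb{T}^2$), the divergence-free constraint, and the interpolation inequalities already recorded.

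I would first establish the antisymmetry \cref{symm1}. For $u,v,w\in\mathcal V$ I would write $\ip{B(u,v)}{w}_{V'}=\int_{\mathbb{T}^2}(u\cdot\grad v)\cdot w\,dx$, integrate by parts to move $\grad$ off $v$, and discard the term in which the derivative lands on $u$ because $\div u=0$; this yields $\ip{B(u,v)}{w}_{V'}=-\ip{B(u,w)}{v}_{V'}$, which extends to all of $V$ by density of $\mathcal V$. Taking $v=w$ in \cref{symm1} then gives the orthogonality \cref{symm2} at once.

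The three estimates \cref{B:424,B:442,B:inf22} all come from applying H\"older's inequality to $\int_{\mathbb{T}^2}(u\cdot\grad v)\cdot w\,dx$ with a suitable choice of exponents, followed by an interpolation bound. For \cref{B:424} I would use exponents $4,2,4$ and control both $L^4$ factors by the Ladyzhenskaya inequality \cref{Ladyzhenskaya}. For \cref{B:442} I would use $4,4,2$, apply \cref{Ladyzhenskaya} to $u$ and to $\grad v$, and absorb the highest-order factor into $\normL{Av}$ via the Poincar\'e-type equivalence \cref{Poincare}. For \cref{B:inf22} I would use $\infty,2,2$ and bound $\normLp{\infty}{u}\leq c\normL{u}^{1/2}\normL{Au}^{1/2}$, the two-dimensional Agmon inequality obtained as a special case of \cref{Sobolev}. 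In each case the bound first holds on $\mathcal V$ and then on the largest spaces for which the right-hand side is finite, by density.

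The crux of the lemma, and the step I expect to be the main obstacle, is the two-dimensional enstrophy identity \cref{enstrophy_miracle}. Since $\div w=0$ implies $Aw=-\lap w$ on $\mathbb{T}^2$, this amounts to showing $\int_{\mathbb{T}^2}(w\cdot\grad w)\cdot\lap w\,dx=0$, a cancellation special to two dimensions. I would verify it by expanding the integrand componentwise on $\mathcal V$, integrating by parts, and checking that the surviving cubic terms vanish on account of $\div w=0$ (equivalently, by writing $w=\grad^\perp\psi$ so that the vorticity transport carries no stretching term), then passing to $D(A)$ by density. Finally, the Jacobi identity \cref{jacobi} I would obtain by polarizing \cref{enstrophy_miracle}: applying it to $w+su$ with $s\in\nR$ gives a quantity that is identically zero in $s$, and reading off the coefficient of $s$ (that is, differentiating at $s=0$) produces precisely the three terms appearing in \cref{jacobi}.
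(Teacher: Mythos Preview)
Your sketch is correct and follows the standard arguments. The paper does not actually prove this lemma; it merely collects the statements and refers the reader to \cite{Constantin_Foias_1988,Temam_2001_Th_Num,Foias_Manley_Rosa_Temam_2001} for details. So there is nothing to compare against beyond noting that your outline is exactly the textbook proof one finds in those references: integration by parts plus $\div u=0$ for \cref{symm1,symm2}; H\"older with the splits $4\text{--}2\text{--}4$, $4\text{--}4\text{--}2$, and $\infty\text{--}2\text{--}2$ together with Ladyzhenskaya and Agmon for \cref{B:424,B:442,B:inf22}; a direct two-dimensional computation (or the stream-function reduction) for \cref{enstrophy_miracle}; and the polarization $w\mapsto w+su$ followed by reading off the linear-in-$s$ coefficient for \cref{jacobi}. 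The polarization step is worth keeping, as it is the cleanest way to get the trilinear Jacobi identity from the cubic cancellation without redoing the componentwise calculation.
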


Next, we summarize a series of well-known results about \cref{NSE} in the next theorem and refer the readers to \cite{Constantin_Foias_1988, Temam_2001_Th_Num, Dascaliuc_Foias_Jolly_2005, Dascaliuc_Foias_Jolly_2007, Dascaliuc_Foias_Jolly_2008, Dascaliuc_Foias_Jolly_2010} for further details. 
We recall the dimensionless Grashof number $G$, defined as 
\begin{align} G := \frac{\|f\|_{L^2}}{\nu^2\lambda_1}, \end{align} 
where $\lambda_1$ is the first eigenvalue of the Stokes operator $A$. 
\begin{theorem}\label{thm1}
Let the initial data $u_0\in V$ and external forcing $f\in L^2_{\text{loc}}(0,T;H)$, then the two-dimensional Navier-Stokes system \cref{NSE} possesses a unique global solution \begin{align} 
u\in C(0, T; V)\cap L^2(0, T; D(A)), 
\end{align}
for any  positive $T < \infty$.

Moreover, for $f\in H$, then after certain time $t>t_0$, 
\begin{align}  \normL{u}\leq 2\nu^2 G^2 := M_0, \quad \normH{u}\leq 2\nu^2 \lambda_1 G^2 := M_1, \text{ and }\normL{\Delta u}\leq C\nu^2 \lambda_1^2 G^4 := M_2. \end{align}
\end{theorem}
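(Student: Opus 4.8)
The plan is to obtain the existence-and-uniqueness assertion by the standard Galerkin--Faedo scheme together with Aubin--Lions compactness, and then to extract the absorbing-ball bounds $M_0$, $M_1$, $M_2$ from a hierarchy of energy estimates closed by the classical and uniform Gronwall lemmas. First I would project \cref{NSE-functional} onto the span of the first $m$ eigenfunctions of the Stokes operator $A$, producing a finite system of ODEs whose solution $u_m$ exists locally by Picard--Lindel\"of. Since every a priori estimate below is uniform in $m$, these solutions are in fact global, and the estimates simultaneously supply the compactness needed to pass to the limit.

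For the estimates I would test \cref{NSE-functional} against successively higher-order quantities. Pairing with $u$ and using the orthogonality \cref{symm2}, so that $\ip{B(u,u)}{u}_{V'}=0$, gives
\begin{align*}
  \tfrac{1}{2}\frac{d}{dt}\normL{u}^2 + \nu\normH{u}^2 = (f,u).
\end{align*}
Estimating $(f,u)$ with Cauchy--Schwarz, the Poincar\'e inequality \cref{Poincare}, and Young's inequality absorbs part of the $\normH{u}^2$ term on the left and leaves a differential inequality of the form $\frac{d}{dt}\normL{u}^2 + \nu\lambda_1\normL{u}^2 \leq C\nu^3\lambda_1 G^2$, whence Gronwall's lemma yields the time-asymptotic bound $M_0$ valid for $t>t_0$. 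For the enstrophy level I would pair \cref{NSE-functional} with $Au$; the essential point is the two-dimensional identity \cref{enstrophy_miracle}, $\ip{B(u,u)}{Au}=0$, which eliminates the nonlinearity outright and leaves
\begin{align*}
  \tfrac{1}{2}\frac{d}{dt}\normH{u}^2 + \nu\normL{Au}^2 = (f,Au).
\end{align*}
Applying Cauchy--Schwarz and Young to $(f,Au)$, using \cref{Poincare} to reinstate a coercive $\normH{u}^2$ term, and then Gronwall's lemma delivers $M_1$ directly, with no nonlinear term to fight. The bound $M_2$ on $\normL{Au}$ requires one further estimate at the next regularity level, where the surviving nonlinear contribution is controlled through \cref{B:442,B:inf22} together with the Ladyzhenskaya inequality \cref{Ladyzhenskaya}; since the nonlinearity no longer cancels, one closes here with the uniform Gronwall lemma, fed by the time-averaged control of $\normL{Au}^2$ furnished by the enstrophy estimate.

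With these $m$-uniform bounds in hand, Aubin--Lions--Simon compactness produces a subsequence converging to a limit $u\in C(0,T;V)\cap L^2(0,T;D(A))$, and one verifies in the usual way that $u$ solves \cref{NSE-functional}. Uniqueness I would obtain by forming the equation for the difference $w=u^{(1)}-u^{(2)}$ of two solutions, pairing with $w$ so that \cref{symm2} kills all but the single term $\ip{B(w,u^{(2)})}{w}_{V'}$, bounding that term by \cref{B:424} and Young's inequality, and applying Gronwall's lemma with the integrable factor $\normH{u^{(2)}}^2$.

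The crux, and the only place where two dimensions is genuinely used, is the control of the nonlinearity in the higher-order estimates: in 3D the corresponding term cannot be absorbed and global regularity is open, whereas here the enstrophy identity \cref{enstrophy_miracle} makes the $H^1$ and $D(A)$ bounds essentially free. The remaining difficulty is purely organizational---arranging the (uniform) Gronwall steps so that the resulting bounds are genuinely absorbing, i.e.\ independent of $u_0$ once $t>t_0$.
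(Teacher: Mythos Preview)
The paper does not actually prove this theorem: it is stated as a summary of well-known facts, with the reader referred to \cite{Constantin_Foias_1988, Temam_2001_Th_Num, Dascaliuc_Foias_Jolly_2005, Dascaliuc_Foias_Jolly_2007, Dascaliuc_Foias_Jolly_2008, Dascaliuc_Foias_Jolly_2010} for details. Your outline is precisely the classical argument one finds in those references---Galerkin approximations, the $L^2$ energy balance exploiting \cref{symm2}, the enstrophy balance exploiting \cref{enstrophy_miracle}, Aubin--Lions compactness, and uniqueness via the difference equation---so there is nothing to compare against in the paper itself. The sketch is correct and matches the standard route.
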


\section{Main Results}

In this section we outline the main results of this work. 
We first show that the solutions, $v$, of \cref{NSE-v} are globally wellposed with regularity equivalent to that of the solution to the observed equation \cref{NSE} for initial data and external forcing that is sufficiently regular.
We next show exponential convergence in time in both $L^2$ and $H^1$ of the assimilated system to the observed system.
In our proof of exponential convergence in $L^2$ we consider only the case of Fourier modal projection as the spatial interpolant with a piece-wise constant time-extrapolation. We do this to fix ideas before considering the case of general interpolants in \Cref{thm-H1}.

We now proceed to proving the global well-posedness of \cref{NSE-v}. The proof this follows almost immediately from \Cref{thm1} for interpolants that satisfy \cref{interpolator_time}. 
As the extrapolation in time is explicit by assumption, at any time $t$  the feedback-control term can be considered an external force that does not depend on $v(t)$. We note that our convergence results are restricted to interpolants satisfying \cref{interpolator_time} however global well-posedness follows for any interpolant with an explicit time-extrapolation component so long as $I_{h,\kappa}(u-v)$ remains in $H$ for all time.

To simplify notation, in the below theorem and subsequent proof we use $I_{h,\kappa}^n:= I_{h,\kappa}\mid_{t_n \leq t < t_1}$, i.e. $I_{h,\kappa}^n(f)$ interpolates $f$ in space and extrapolates $f$ in time up to time $t_{n+1}$. We note that one can potentially utilize a $I_{h,\kappa}$ that changes depending on the value of $n$, for example one could opt to use all $n$ previous observations to create a $n-1$th order extrapolation. Such extrapolants pose no issue for global well-posedness so long as the resulting function remains in $L^2(t_n,t_{n+1};H)$. We note that for explicit interpolants $I_{h,\kappa}^n$, in particular those with a high order time-extrapolation component, can consider $I_{h,\kappa}^n(f)$ a function of $m$  previous observed values of $f$, i.e. $I_{h,\kappa}^n(f) := I_{h,\kappa}^n(f(t_0),f(t_1),...,f(t_m))$ for some $m \in \mathbb{N}$. To simplify notation we write $I_{h,\kappa}^n : X \to L^2(t_n,t_{n+1};H)$ with the understanding that $X$ is a Cartesian product of $V$ that potentially varies with $n$.




\begin{theorem}\label{thm1-AOT}
Let the initial data $v_0\in V$ and external forcing $f\in H$, then the 2D assimilated Navier-Stokes equations \cref{NSE-v} possess a unique global solution 
\begin{align} 
v\in C(0, T; V)\cap L^2(0, T; D(A)), 
\end{align} 
for any positive $T< \infty$ and any linear interpolant with explicit\footnote{Here, by explicit, we mean that $I_{h,\kappa}v$ can be computed for any given $v\in V$; i.e., with no dependence on future times $t>t_n$.} time-extrapolation $I_{h,\kappa}^n:X \to L^2(t_n,t_{n+1};H)$ with $t\in [t_{n},t_{n+1})$ for each $n \in \mathbb{N}\cup\set{0}$, where $X$ is a Cartesian product\footnote{representing the observations of previous times} of copies of $V$.
\end{theorem}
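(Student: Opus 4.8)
The plan is to exploit the \emph{decoupling} afforded by the explicit time-extrapolation: on each observation interval $[t_n,t_{n+1})$ the feedback-control term depends on $v$ only through its already-determined values at the earlier observation times $t_0,\dots,t_n$, and hence acts as a \emph{prescribed} external force rather than a genuine feedback. This reduces \cref{NSE-AOT-functional} on each such interval to a standard forced 2D Navier--Stokes system, to which \Cref{thm1} applies directly. I would then build the global solution by induction on $n$ and concatenate, using the $C(\,\cdot\,;V)$-regularity to match across endpoints. Since $T<\infty$ and $\kappa>0$, only finitely many intervals $[t_n,t_{n+1})$ meet $[0,T)$, so the procedure terminates.

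For the base case, on $[t_0,t_1)$ set
\[
F_0(t) := f + \mu P_\sigma I_{h,\kappa}^0[u-v]\,\chi_{0,\tau}(t).
\]
Because $I_{h,\kappa}^0$ is an explicit time-extrapolant, its only $v$-dependence is through the observation $v(t_0)=v_0\in V$, which is given; together with the observed field $u$ (which exists and is as regular as in \Cref{thm1}), the argument $u-v$ furnishes a fixed input in $X$. By hypothesis $I_{h,\kappa}^0:X\to L^2(t_0,t_1;H)$, and since $P_\sigma$ maps $H$ into $H$ and $f\in H$, we obtain $F_0\in L^2(t_0,t_1;H)\subset L^2_{\mathrm{loc}}$. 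Applying \Cref{thm1} with initial datum $v_0\in V$ yields a unique
\[
v\in C(t_0,t_1;V)\cap L^2(t_0,t_1;D(A))
\]
solving \cref{NSE-v} on $[t_0,t_1)$.

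For the inductive step, suppose $v$ has been uniquely constructed on $[t_0,t_n)$ with the stated regularity; in particular each $v(t_j)\in V$ is determined for $j<n$, and $v(t_n)\in V$ is well-defined as the left-hand limit by the $C(\,\cdot\,;V)$-regularity. On $[t_n,t_{n+1})$ the input to $I_{h,\kappa}^n$ is the tuple of finitely many past observations $\big(u(t_j)-v(t_j)\big)\in X$, now known, so $F_n := f+\mu P_\sigma I_{h,\kappa}^n[u-v]\chi_{n,\tau}\in L^2(t_n,t_{n+1};H)$ by the same reasoning. \Cref{thm1}, applied with initial datum $v(t_n)\in V$, produces a unique solution on $[t_n,t_{n+1})$ with the required regularity. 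Concatenating the finitely many pieces gives $v\in C(0,T;V)\cap L^2(0,T;D(A))$: continuity in $V$ across each $t_n$ is automatic since the right-hand piece starts at $v(t_n)$, and the finite sum of $L^2(\,\cdot\,;D(A))$-norms remains finite.

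I expect the only point requiring genuine care — rather than a true obstacle — to be the verification that the feedback term lies in $L^2(t_n,t_{n+1};H)$ for every admissible interpolant, i.e. that the explicit, linear time-extrapolation of previously observed $V$-valued data indeed produces a forcing in $L^2(t_n,t_{n+1};H)$. This is precisely the content of the standing hypothesis $I_{h,\kappa}^n:X\to L^2(t_n,t_{n+1};H)$, so it holds by assumption; for higher-order extrapolants one need only check that the finitely many $V$-valued observations are combined into an $H$-valued function that is square-integrable in time, which is guaranteed as soon as $I_{h,\kappa}^n(u-v)$ remains in $H$ for all $t$. Uniqueness of the global solution then follows from the interval-wise uniqueness in \Cref{thm1}, together with the fact that the initial datum on each interval is forced by the endpoint value of the previous one.
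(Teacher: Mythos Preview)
Your proposal is correct and follows essentially the same approach as the paper: both reduce \cref{NSE-AOT-functional} on each observation interval to a forced 2D Navier--Stokes system with a \emph{known} forcing (since the explicit time-extrapolant depends only on previously determined values), then invoke \Cref{thm1} and iterate. The only cosmetic difference is that the paper further subdivides each $[t_n,t_{n+1})$ into $[t_n,t_n+\tau]$ (where the nudging is active) and $[t_n+\tau,t_{n+1})$ (where it vanishes), whereas you absorb the indicator $\chi_{n,\tau}$ into the forcing $F_n$ and treat the whole interval at once; your version is marginally more streamlined but otherwise identical in substance.
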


\begin{proof}
The proof of this follows from the global well-posedness of the 2D incompressible Navier-Stokes equations. 
In particular, by \Cref{thm1}, we know that for $u_0 \in V$ that $u \in L^\infty(0,T;V)\cap L^2(0,T;D(A))$.
Let us first consider a fixed time $t\in [t_0,t_0+\tau]$.
We know by assumption that $v_0 \in V$ and so $v_0 \in H$.
Now we set 
\begin{align}
    f_0 := f + \mu I_{h,\kappa}^0(u-v).
\end{align}
To apply \Cref{thm1} we require $f_0 \in L^2(t_0,t_0+\tau;H)$, which holds for $I_{h,\kappa}^0(u-v) \in L^2(t_0,t_0+\tau;H)$. 
Note here that for $I_{h,\kappa}^0$ an explicit time-extrapolant, $I_{h,\kappa}^0(u-v)$ depends only on previous states of $u-v$ that are known to be in $H$.
In particular we note that $I_{h,\kappa}^0(u-v)$ has no dependence on $v(t)$ or $v(t_1)$, for $t_0 < t \leq t_0 + \tau$.
Thus  $I_{h,\kappa}^0(u-v)$ is a function only of previous times.
Without loss of generality we will assume that $I_{h,\kappa}^0(u-v)$ depends only on $u_0$ and $v_0$ although in practice one could have several initial states of $u$ corresponding to times before $t_0$ with some chosen states to initialize $v$ with.
We know that $I_{h,\kappa}^0(u-v) \in L^2(t_0,t_0+\tau;H)$ by assumption, for regular data $u_0$ and $v_0$.
Thus we have $f_0 \in L^2(t_0,t_0+\tau;H)$.

Now, by \Cref{thm1}, $v\in C(t_0,t_0+\tau;V)\cap L^2(t_0,t_0+\tau;D(A))$.
Moreover on the time interval $t\in [t_0+\tau, t_1]$ we note that the feedback-control term vanishes.
Therefore we can continue our solution from time $t_0+\tau$ up to time $t_1$ with $v \in C(t_0,t_1;V)\cap L^2(t_0,t_1;D(A))$.

Iterating this procedure with $f_i = f + I_{h,\kappa}^i(u-v)$  we obtain $v\in C(t_0,T;V)\cap L^2(0,T;D(A))$.
Additionally we note that also by \cref{thm1} that all of the extensions of $v$ to time $t_i$ produced by this procedure are unique.
\end{proof}

To prove exponential convergence of $v$ to $u$ in $L^2$, we first consider the case when $I_{h,\kappa}(f(t)) :=  P_m(f({t_n}))$, for $t \in [t_{n},t_{n+1})$, i.e., the projection onto the first $m$ Fourier modes with piecewise constant extrapolation in time. 
This particular case was studied by itself in \cite{Foias_Mondaini_Titi_2016} for $\tau = \kappa$, so we similarly look at this simplified choice of interpolants to fix the ideas before tackling the general case.
In the general setting we extend our results for general interpolants that satisfy \cref{interpolator_time}. It is worth noting that the conditions on our interpolators restrict us to zero-th order time-extrapolants, which is piece-wise constant extrapolation. Our results can be extended to interpolants with a higher-order time extrapolation, including piece-wise linear extraplation, which require estimates on norms of higher order time derivatives of $u$ and $v$. We note that such a generalization is unwarranted as, in computations we can simply decrease the value of $\tau$ to be small enough such that the extrapolation will be approximately constant while increasing $\mu$ to compensate.

We note that the requirements of \Cref{thm-L2} are similar to those in \cite{Foias_Mondaini_Titi_2016} for observational data that is noise-free, however we require the extra assumption \cref{decay assumption}. 
This estimate seems necessary in order to obtain exponential decay in $L^2$. 
We note that in the case of $\tau = \kappa$, \cref{decay assumption} is automatically satisfied for a sufficient constant $C>0$.
It is worth mentioning that one could relax the conditions on $\kappa$ in \cref{kappa-bound} and instead require such a bound only on $\tau$.
We place this restriction on $\kappa$ instead as we want such a condition to hold for arbitrary choice of $\tau \leq \kappa$.




\begin{theorem}\label{thm-L2}
Let $u$ and $v$ be solutions to \cref{NSE} and \cref{NSE-v}, respectively, with initial data $u_0, v_0 \in V$ and $v$ defined on $[t_0,\infty)$.
Suppose $I_{h,\kappa}(f(t)) :=  P_m(f({t_n}))$, for $t \in [t_{n},t_{n+1})$, and the following conditions are satisfied with $0<\tau \leq \kappa$,
\begin{align} \lambda_{m+1} \geq 6\frac{\mu}{\nu}, \end{align}

\begin{align} \mu  \geq C \frac{M_1^2}{\nu}, \end{align}
\begin{align}\left( e^{-\frac{\mu}{2}\tau} + K\mu\tau (1- e^{-\frac{\mu}{2}\tau})\right)e^{\frac{cM_1^2}{\nu}\kappa} < 1, \label{decay assumption} \end{align}

 \begin{align} \label{kappa-bound}
 \kappa  \leq \frac{C}{\mu}\min{\Bigg \{} & 1, \frac{\nu}{R}, \frac{\nu^2}{R^2}, \frac{\nu^{3/2}\mu^{1/2}}{M_0M_1},
 \frac{\nu^2\lambda_1^{1/2}}{M_0M_1}, \frac{\nu^{1/2}\lambda_1^{1/2}}{\mu^{1/2}}, \frac{\nu^2 \lambda_1^2}{\mu^2}, \frac{\nu \mu}{M_1^2}\Bigg\},
 \end{align}
 then $\normL{u-v}$ decays to zero exponentially fast in time.

 Here $C>0$ is an absolute constant and $K>0 $, depending only on $\nu, \lambda_1, M_0, M_1$ and  absolute constants, is specified in \cref{constant:K}. 
\end{theorem}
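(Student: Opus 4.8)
The plan is to work with the difference $w:=u-v$, analyze its evolution separately on the ``nudging'' sub-interval $[t_n,t_n+\tau)$ and the ``relaxation'' sub-interval $[t_n+\tau,t_{n+1})$, and combine the two into a per-period contraction $\normL{w(t_{n+1})}\le q\,\normL{w(t_n)}$ with $q<1$. Since $t_{n+1}-t_n=\kappa$ is fixed, geometric decay of $\{\normL{w(t_n)}\}$ plus a crude bound on each interval yields exponential decay in $t$. Subtracting \cref{NSE-functional} from \cref{NSE-AOT-functional} and using $B(u,u)-B(v,v)=B(u,w)+B(w,v)$ gives, on $[t_n,t_{n+1})$,
\begin{align}
 w_t + \nu A w + B(u,w) + B(w,v) = -\mu P_m w(t_n)\chi_{n,\tau},
\end{align}
since for Fourier projection $P_\sigma I_{h,\kappa}[u-v]=P_m w(t_n)$ on that interval.

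For the nudging phase I would take the $\normL{\cdot}$ inner product with $w$, kill $\ip{B(u,w)}{w}$ by \cref{symm2}, and estimate $\ip{B(w,v)}{w}$ via \cref{B:424} together with the uniform bound $\normH{v}\le M_1$, obtaining
\begin{align}
 \tfrac12\frac{d}{dt}\normL{w}^2 + \nu\normH{w}^2 \le \tfrac{\nu}{2}\normH{w}^2 + \tfrac{CM_1^2}{\nu}\normL{w}^2 - \mu\ip{P_m w(t_n)}{w}.
\end{align}
Writing $w(t_n)=w(t)-(w(t)-w(t_n))$ and using $\ip{P_m w}{w}=\normL{P_m w}^2=\normL{w}^2-\normL{Q_m w}^2$ with $Q_m:=I-P_m$, the feedback term splits into a dissipative part $-\mu\normL{w}^2$, a high-mode remainder $\mu\normL{Q_m w}^2\le(\mu/\lambda_{m+1})\normH{w}^2$, and a time-lag error $\mu\ip{P_m(w(t)-w(t_n))}{w}$. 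The hypotheses $\lambda_{m+1}\ge6\mu/\nu$ and $\mu\ge CM_1^2/\nu$ let me absorb the remainder and the nonlinear contribution, and after dividing by $\normL{w}$ this leaves
\begin{align}
 \frac{d}{dt}\normL{w} + \tfrac{\mu}{2}\normL{w} \le \mu\,\normL{P_m(w(t)-w(t_n))}.
\end{align}

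The crux is bounding the time-lag error. Since $w(t)-w(t_n)=\int_{t_n}^t w_s\,ds$, I would estimate $\normL{P_m w_s}$ from the evolution equation: the Stokes piece $\nu\normL{AP_m w}\le\nu\lambda_m\normL{w}$, the two nonlinear pieces $\normL{P_m B(u,w)}$ and $\normL{P_m B(w,v)}$ controlled through \cref{B:442,B:inf22} in terms of $M_0,M_1,R$ and $\normL{w}$, and the feedback piece $\mu\normL{P_m w(t_n)}\le\mu\sup\normL{w}$. This produces $\normL{P_m(w(t)-w(t_n))}\le\tau\,C_*\sup_{[t_n,t]}\normL{w}$, where $C_*$ is dominated by the feedback contribution $C_*\sim\mu$ while the remaining pieces are forced to be lower order precisely by the competing smallness ratios in \cref{kappa-bound}. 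Integrating the differential inequality over $[t_n,t_n+\tau]$ then gives
\begin{align}
 \normL{w(t_n+\tau)} \le \Big(e^{-\frac{\mu}{2}\tau} + K\mu\tau\big(1-e^{-\frac{\mu}{2}\tau}\big)\Big)\sup_{s\in[t_n,t_n+\tau]}\normL{w(s)},
\end{align}
with $K$ the constant of \cref{constant:K}; the self-referential supremum is removed by a standard continuity/bootstrap argument valid once $K\mu\tau<1$.

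Finally, on the relaxation phase $[t_n+\tau,t_{n+1})$ the feedback term vanishes, so the same energy estimate without control gives $\frac{d}{dt}\normL{w}^2\le(2CM_1^2/\nu)\normL{w}^2$, hence $\normL{w(t_{n+1})}\le e^{\frac{cM_1^2}{\nu}\kappa}\normL{w(t_n+\tau)}$. Composing the two phases produces $\normL{w(t_{n+1})}\le q\,\normL{w(t_n)}$ with $q$ equal to the left-hand side of \cref{decay assumption}, which is $<1$ by hypothesis; iterating yields $\normL{w(t_n)}\le q^{(t_n-t_0)/\kappa}\normL{w(t_0)}$, i.e.\ exponential decay. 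I expect the main obstacle to be the time-lag estimate: securing $C_*$ with the correct $\mu$-dependence (so that the error matches the $K\mu\tau$ term rather than something worse) requires delicately controlling $\normL{P_m w_s}$ and is exactly what forces the long list of smallness conditions on $\kappa$ in \cref{kappa-bound}; the secondary difficulties are closing the supremum bootstrap and establishing the uniform bound $\normH{v}\le M_1$ for the assimilated solution.
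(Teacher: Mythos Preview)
Your overall architecture—energy estimate on $w$, split into nudging and relaxation phases, per-period contraction with factor equal to the left side of \cref{decay assumption}—matches the paper exactly. Two points diverge, one minor and one substantive.

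\textbf{Minor.} Your bilinear decomposition $B(u,u)-B(v,v)=B(u,w)+B(w,v)$ forces you to control $\normH{v}$, which is not given and which you flag as a difficulty. The paper instead writes (with $w=v-u$) $B(v,v)-B(u,u)=B(w,u)+B(u,w)+B(w,w)$; after pairing with $w$, only $\ip{B(w,u)}{w}$ survives, and this needs only the attractor bound $\normH{u}\le M_1$ from \Cref{thm1}. This is easily repaired.

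\textbf{Substantive gap.} Your time-lag estimate bounds $\normL{P_m w_s}$ directly and asserts the result is $C_*\sup\normL{w}$ with $C_*\sim\mu$. This does not close as stated. The Stokes piece $\nu\normL{AP_m w}\le\nu\lambda_m\normL{w}$ introduces $\lambda_m$, on which the hypotheses place only a \emph{lower} bound (via $\lambda_{m+1}\ge 6\mu/\nu$), and the nonlinear pieces $\normL{P_m B(\cdot,\cdot)}$ cannot be bounded by $\normL{w}$ alone without either $\lambda_m^{1/2}$ factors (coming from $\normL{P_m g}\le\lambda_m^{1/2}\norm{g}_{V'}$) or pointwise control of $\normH{w}$, neither of which is available. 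None of the ratios in \cref{kappa-bound} contain $\lambda_m$, so these contributions cannot be rendered lower order by those assumptions.

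The paper avoids this by working one regularity level down: it estimates the lag term as
\begin{align*}
\mu\,\abs{(P_m(w(t_n)-w),w)}\le \mu\Big(\int_{t_n}^t\norm{w_s}_{V'}\,ds\Big)\normH{w}\le \tfrac{\nu}{6}\normH{w}^2+\tfrac{c\mu^2}{\nu}\Big(\int_{t_n}^t\norm{w_s}_{V'}\,ds\Big)^2,
\end{align*}
absorbing the $\normH{w}$ factor into the dissipation, and then bounds $\norm{w_s}_{V'}$ (not $\normL{w_s}$), for which the Stokes contribution is simply $\nu\normH{w}$—no $\lambda_m$. The resulting integrand $\phi(s)$ involves $\normH{w}^2$, which is controlled not pointwise but through the \emph{integrated} dissipation bound $\int_{t_n}^t\normH{w}^2\,ds\le\tfrac{2}{\nu}\normL{w(t_n)}^2$, itself harvested from the energy inequality after a first pass. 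This two-pass use of the viscous term (first absorb, then extract the time integral) is precisely what produces the $K\mu\tau$ structure with $K$ as in \cref{constant:K}, and it is the mechanism your direct $L^2$ route lacks.
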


\begin{remark}
We note that the conditions above are not vacuous, although the requirements on $\mu$, $\tau$, and $\kappa$ depend on one another. Condition \cref{decay assumption} yields explicit bounds on $\kappa$ by fixing a value of $\mu\tau$.  For instance, setting $\mu\tau = \frac{1}{2K}$ yields
\begin{align}
e^{-\frac{\mu}{2}\tau} + K\mu\tau (1- e^{-\frac{\mu}{2}\tau}) = \frac{1}{2}\left( 1 + e^{-\frac{\mu}{2}\tau} \right). \label{not-theta}
\end{align} 
which in turn yields the following smallness condition on $\kappa$:
\begin{align}
    \kappa < -\frac{\nu}{cM_1^2}\ln{\left(\frac{1}{2}\Big(1+e^{-\frac{1}{4K}}\Big)\right)}.
\end{align}

We note that the particular value of \cref{not-theta} is unimportant so long as it is bounded away from $1$.
It is worth noting that the minimum value of of \cref{not-theta} can be extracted in terms of the Lambert-W function and choosing $\mu\tau$ to attain this minimum will provide the steepest decay.
However this also will require the most restrictive smallness condition for $\kappa$.

\end{remark}

\begin{proof}
The proof this follows similarly from that of the case $\tau = \kappa$ as found in \cite{Foias_Mondaini_Titi_2016} in the absence of stochastic noise, though one has to work differently on the intervals $[t_n, t_n+\tau)$ and $[t_n+\tau, t_{n+1})$.
Denoting by $w = v-u$, subtracting \cref{NSE} from \cref{NSE-v}, we obtain the following equation for $w$:
\begin{align}
    \label{NSE-w}
    \frac{dw}{dt} + \nu Aw + B(w,u) + B(u,w) + B(w,w) = -\mu \sum_{n=0}^\infty P_m w(t_n)\chi_{n,\tau}.
\end{align}
Multiplying \cref{NSE-w} by $w$ and integrating over $\Omega$, yields
\begin{align}\label{L2-estimate}
    \frac{1}{2}\frac{d}{dt}\normL{w}^2 
    + 
    \nu \normH{w}^2 
    &= 
    -
    \braket{B(w,u),w}_{V',V} 
    - 
    \mu \sum_{n=0}^\infty \normL{P_mw}^2\chi_{n,\tau}
    \\&\quad\notag
    - 
   \mu \sum_{n=0}^\infty (P_m(w(t_n)-w),w)_{L^2}\chi_{n,\tau}.
\end{align}

We now move to estimate the individual terms on the right side of the above equation. 
To bound the first term we use Sobolev inequalities, Young's inequality, as well as \Cref{thm1}
\begin{align}
    \abs{\braket{B(w,u),w}_{V',V}} &\leq c\normH{u}\normL{w}\normH{w}\\
    &\leq \tfrac{\nu}{6}\normH{w}^2 + \tfrac{c}{\nu}\normH{u}^2\normL{w}^2\\
    &\leq \tfrac{\nu}{6}\normH{w}^2 + c\tfrac{M_1^2}{\nu}\normL{w}^2.
\end{align}
Here and henceforth, $c$ denotes an absolute constant that may change from line to line depending only on the dimension and the domain of the problem.

Regarding the second term we find
\begin{align}
    -\mu \normL{P_mw}^2 
    &= 
    -
    \mu \Big(\normL{w}^2 
    - 
    \normL{Q_m w}^2\Big)
    \\&= 
    -
    \mu \normL{w}^2 
    + 
    \mu \normL{Q_m w}^2
    \\&
    \leq 
    -
    \mu \normL{w}^2 
    + 
    \tfrac{\mu}{\lambda_{m+1}}\normH{Q_m w}^2
    \\&
    \leq
    -
    \mu \normL{w}^2 
    + 
    \tfrac{\nu}{6}\normH{w}^2.
\end{align}
Here $Q_m u = u - P_m u$ is the projection onto all of the Fourier modes larger than $m$.
This in turn implies 
\begin{align}  -\mu \sum_{n=0}^\infty \normL{P_mw}^2\chi_{n,\tau} \leq \sum_{n=0}^\infty \left( -\mu \normL{w}^2 + \frac{\nu}{6}\normH{w}^2\right)\chi_{n,\tau}.  \end{align}

As for the last term, we estimate
\begin{align}
    \mu \abs{(P_m(w(t_n)-w(t),w(t))_{L^2}} &= \mu \abs{ (w(t_n)-w(t),P_m w(t))_{L^2} }\notag\\
    &= \mu \abs{ \left(\int_{t_n}^t \pd{w}{s} ds, P_mw(t) \right)_{L^2}  }\notag\\
    &\leq \mu \left( \int_{t_n}^t \norm{\pd{w}{s}(s)}_{V'} ds \right)\normH{w(t)}\notag\\
    &\leq \frac{\nu}{6}\normH{w(t)}^2 + \frac{c\mu^2}{\nu}\left( \int_{t_n}^t \norm{\pd{w}{s}(s)}_{V'} ds \right)^2.\label{3rd-term-time}
\end{align}

In order to bound this term we now require estimates on the time-derivative $\norm{\pd{w}{s}}_{V'}$, for which we now work directly with \cref{NSE-w} to obtain. For fixed $t\in [t_n,t_n+\tau)$, it follows that
\begin{align}
    \norm{\pd{w}{s}(t)}_{V'} \label{V'Bound} 
    &
    \leq 
    \nu \normH{w} 
    + 
    \norm{B(w,u)}_{V'} 
    + 
    \norm{B(u,w)}_{V'} 
    \\&\quad\notag
    + 
    \norm{B(w,w)}_{V'} 
    + 
    \mu \sum_{n=0}^\infty\norm{P_m w(t_n)}_{V'}\chi_{n,\tau}
    \\&
    \leq \notag
    \nu \normH{w} 
    + 
    cM_0^{1/2}M_1^{1/2}\normL{w}^{1/2}\normH{w}^{1/2} 
    + 
    c\normL{w}\normH{w} 
    \\&\quad \notag
    + 
    \mu \int_{t_n}^t \norm{\pd{w}{s}(s)}_{V'}ds 
    + 
    \frac{\mu}{\lambda_1^{1/2}}\normL{w}.
\end{align}


Integrating from $t_n$ to $t\in [t_n,t_{n}+\tau]$ yields
\begin{align}
    \int_{t_n}^{t} \norm{\pd{w}{s}(t)}_{V'} ds 
    &
    \leq 
    \int_{t_n}^t 
    \Big(\nu \normH{w} 
    + 
    cM_0^{1/2}M_1^{1/2}\normL{w}^{1/2}\normH{w}^{1/2} \Big.
    + 
    \\&\quad\notag
    \Big.
    c\normL{w}\normH{w}  + \frac{\mu}{\lambda_1^{1/2}}\normL{w}\Big) ds
    + 
    \mu \tau \int_{t_n}^t \norm{\pd{w}{s}(s)}_{V'}ds, 
\end{align}
where we used the following fact
\begin{equation}
    \int_{t_n}^t\int_{t_n}^s \norm{\frac{dw}{dz}(z)}dzds 
    \leq 
    \int_{t_n}^t \int_{t_n}^t \norm{\frac{dw}{dz}(z)}_{V'}dzds 
    \leq 
    \tau \int_{t_n}^t \norm{\pd{w}{s}(s)}_{V'}ds.
\end{equation}

Observing our assumptions $\tau \leq \kappa \leq \frac{c}{\mu}$ we obtain the following estimate:
\begin{align}
    \int_{t_n}^t \norm{\pd{w}{s}}_{V'} ds 
    &\leq 
    c \int_{t_n}^t \nu \normH{w} 
    + 
    cM_0^{1/2}M_1^{1/2}\normL{w}^{1/2}\normH{w}^{1/2} 
    \\&\quad\notag
    + 
    \normL{w}\normH{w} 
    + 
    \frac{\mu}{\lambda_1^{1/2}}\normL{w} ds.
\end{align}

Squaring both side and applying H\"older's inequality now yields the estimate we require for \cref{3rd-term-time}:
\begin{equation}
    \left(\int_{t_n}^t \norm{\pd{w}{s}}_{V'} ds\right)^2 
    \leq 
    c\tau^2 \int_{t_n}^t \phi(s)ds,
\end{equation}
where 
\begin{equation}
    \phi(s) 
    := 
    \nu^2 \normH{w}^2 
    + 
    M_0M_1\normL{w}\normH{w} 
    + 
    \normL{w}^2\normH{w}^2 
    + 
    \frac{\mu^2}{\lambda_1}\normL{w}^2.
\end{equation}
Combining all the above estimates in \cref{L2-estimate}, we obtain 
\begin{equation}\label{EQ}
    \frac{d}{dt}\normL{w}^2 
    + 
    \nu \normH{w}^2 
    \leq 
    -
    \mu\sum_{n=0}^\infty \normL{w}^2\chi_{n,\tau} 
    + 
    \frac{cM_1^2}{\nu}\normL{w}^2 
    + 
    \frac{c\mu^2\tau^2}{\nu}\sum_{n=0}^\infty\chi_{n,\tau}(t)\int_{t_n}^t\phi(s)ds.
\end{equation}

Now we note that by \Cref{thm1} the following must be true:
\begin{align}
     \normL{w(t_0)} \leq \normL{u(t_0)} + \normL{v(t_0)} \leq 2M_0:= R. 
\end{align}
Here we are assuming without loss of generality that $\normL{v(t_0)}\leq M_0$.
As $w\in C([t_0,\infty); H)$, there must exist an $s \in [t_0,\infty)$ such that $\normL{w(t)} \leq 2R,\,\,\text{ for all } t\in [t_0,s]$.

Denote 
\begin{align} \widetilde{t} := \sup\{s \in [t_0, \infty): \sup_{t\in[t_0,s]}\normL{w(t)}< 2R\}. \end{align} 

We now show that  $\widetilde{t} > t_0 + \tau $. Supposing that $\widetilde{t}\leq t_0+\tau$
we integrate \cref{EQ} in time from $t_0$ to $t \leq \widetilde{t}$:
\begin{align}
    &
    \normL{w(t)}^2 
    - 
    \normL{w(t_0)}^2 
    + 
    \nu \int_{t_0}^t \normH{w(s)}^2 ds 
    \\&\quad
    \leq 
    -
    \left(\mu - \frac{cM_1^2}{\nu}\right)\int_{t_0}^t\normL{w(s)}^2ds 
    + 
    \frac{c\mu^2\tau^2}{\nu}\int_{t_0}^t \phi(s)ds.\nonumber 
\end{align}

Then, by Young's inequality and the assumption $\normL{w(s)}\leq 2R$ for all $s\in [t_0,t]$, we obtain
\begin{align}
    \frac{c\mu^2\tau^2}{\nu} \int_{t_0}^t \phi(s)ds 
    &
    = 
    \frac{c\mu^2\tau^2}{\nu}\int_{t_0}^t \nu^2\normH{w}^2 
    + 
    M_0M_1 \normL{w}\normH{w} 
    \\&\quad\notag
    + 
    \normL{w}^2\normH{w}^2 
    + 
    \frac{\mu^2}{\lambda_1}\normL{w}^2 ds
    \\&
    \leq \notag
    \frac{c\mu^2\tau^2}{\nu}\int_{t_0}^t \nu^2\normH{w}^2 
    + 
    \frac{M_0M_1}{\nu^2}\normL{w} 
    \\&\quad\notag
    + 
    R^2\normH{w}^2 
    + 
    \frac{\mu^2}{\lambda_1}\normL{w}^2 ds. 
\end{align}
After rearranging, we obtain
\begin{align}
    &
    \normL{w(t)}^2 
    - 
    \normL{w(t_0)}^2 
    + 
    \nu \Big(1-c\mu^2\tau^2(1+\frac{R^2}{\nu^2})\Big)\int_{t_0}^t \normH{w(s)}^2ds
    \\&\quad \notag
    \leq 
    -
    \left(\mu - \frac{cM_1^2}{\nu} - \frac{c\mu^2\tau^2M_0^2M_1^2}{\nu^3} - \frac{c\mu^4\tau^2}{\lambda_1\nu}\right)\int_{t_0}^t \normL{w}^2 ds.
\end{align}

Next, for sufficiently small $\tau$ and large enough $\mu$ 
\begin{equation}
    1-c\mu^2\tau^2(1+\frac{R^2}{\nu^2}) 
    \geq 
    \frac{1}{2}
\end{equation}
and
\begin{equation}
    -\left(\mu - \frac{cM_1^2}{\nu} - \frac{c \mu^2 \tau^2 M_0^2M_1^2}{\nu^3} - \frac{c\mu^4\tau^2}{\lambda_1\nu}\right)
    \leq 
    -\frac{\mu}{2},
\end{equation}
which further implies
\begin{equation}
    \normL{w(t)}^2 
    - 
    \normL{w(t_0)}^2 
    + 
    \frac{\nu}{2} \int_{t_0}^t \normH{w(s)}ds
    \leq
    -
    \frac{\mu}{2}\int_{t_0}^t \normL{w}^2 ds.
\end{equation}
Note that from the above inequality, 
\begin{equation}\label{H1_integral_bound}
    \int_{t_0}^t \normH{w(s)}^2ds 
    \leq 
    \frac{2}{\nu} \normL{w(t_0)}^2.
\end{equation}
We then return to \cref{EQ} and apply Poincar\'e's inequality in order to bound all the terms in the definition of $\phi$ in terms of $\normH{w}^2$. Namely, we obtain
\begin{equation}
    \dv{}{t}\normL{w}^2 
    \leq 
    -
    \Big(\mu - \frac{cM_1^2}{\nu}\Big)\normL{w}^2 
    + 
    \frac{c\mu^2\tau}{\nu}\left( \nu^2 + \frac{M_0M_1}{\lambda_1^{1/2}} 
    + 
    R^2 \frac{\mu^2}{\lambda_1^2} \right)\int_{t_0}^t\normH{w}^2. \\
\end{equation}

Again, utilizing our smallness condition on $\mu$ we obtain
\begin{equation}
    \dv{}{t}\normL{w}^2 
    \leq 
    -
    \frac{\mu}{2}\normL{w}^2 
    + 
    \frac{c\mu^2\tau}{\nu}\left( \nu^2 + \frac{M_0M_1}{\lambda_1^{1/2}} 
    + 
    R^2 \frac{\mu^2}{\lambda_1^2} \right)\int_{t_0}^t\normH{w}^2. \\
\end{equation}
Note that the above equation is only valid for $t\in [t_0,\widetilde{t}\,]$.
Now, using \cref{H1_integral_bound} we obtain the following inequality
\begin{equation}
    \dv{}{t}\normL{w}^2 
    \leq
    -
    \frac{\mu}{2}\normL{w}^2 
    + 
    \frac{c\mu^2\tau}{\nu}\left( \nu^2 + \frac{M_0M_1}{\lambda_1^{1/2}} + R^2 \frac{\mu^2}{\lambda_1^2} \right)\int_{t_0}^t\frac{2}{\nu}\normL{w(t_0)}^2.
\end{equation}

Again we integrate this equation from time $t_0$ to $t$, which yields
\begin{align}
    \normL{w(t)}^2 
    &
    \leq 
    \normL{w(t_0)}^2\left( e^{-\frac{\mu}{2}(t-t_0)} \right.
    \\&\quad \notag
    \left.
    + c\mu \tau\Big(1+ \frac{M_0M_1}{\nu^2\lambda_1^{1/2}} + \frac{R^2}{\nu^2} + \frac{\mu^2}{\nu^2\lambda_1^2}\Big)\left(1- e^{-\frac{\mu}{2}(t-t_0)} \right) \right)
    \\&
    \leq \notag
    R^2 \left( e^{-\frac{\mu}{2}(t-t_0)} + K\mu\tau (1- e^{-\frac{\mu}{2}(t-t_0)})\right), 
\end{align}
where \begin{align} K := c\Big(1+ \frac{M_0M_1}{\nu^2\lambda_1^{1/2}} + \frac{R^2}{\nu^2} + \frac{\mu^2}{\nu^2\lambda_1^2}\Big).\label{constant:K} \end{align}
We also note 
\begin{equation}
    \left( e^{-\frac{\mu}{2}(t-t_0)} + K\mu\tau(1- e^{-(\frac{\mu}{2})(t-t_0)})\right) < 1,
\end{equation}
holds for any choice of $t\leq \tau$ so long as 
\begin{align}
    \mu\tau < \frac{1}{K},
\end{align}
holds. This follows from \cref{decay assumption} and  thus, we proved 
\begin{align} \normL{w(t)} \leq R\text{ for all } t\in [t_0,\widetilde{t}\,], \end{align} 
implying $\widetilde{t} > t_0+\tau$. 
In particular, we obtain 
\begin{equation}
    \normL{w(t_0+\tau)}^2 \leq \normL{w(t_0)}^2\theta,
\end{equation}
where
\begin{align}
   \theta := \left( e^{-\frac{\mu}{2}\tau} + K\mu\tau (1- e^{-\frac{\mu}{2}\tau})\right) < 1.
\end{align}

Next, we follow a similar argument for $\widetilde{t}$ in the time interval $[t_0+\tau, t_1)$. We start by considering \cref{EQ} on the time interval $[t_0+\tau, t]$, where $t\in[t_0+\tau, \widetilde{t}\,]$, i.e., 
\begin{align}
    \dv{}{t}\normL{w}^2 
    &
    \leq 
    \frac{cM_1^2}{\nu}\normL{w}^2 - \nu \normH{w}^2
    \leq 
    \frac{cM_1^2}{\nu}\normL{w}^2.
\end{align}
Integrating the above inequality in time from $t_0+\tau$ to $t$ yields:
\begin{align}
    \normL{w(t)}^2 
    &
    \leq 
    \normL{w(t_0+\tau)}e^{\frac{cM_1^2}{\nu}(t-(t_0+\tau))}
    \\&
    \leq \notag
    \normL{w(t_0)}^2\theta e^{\frac{cM_1^2}{\nu}(t-(t_0+\tau))}
    \\&\notag
    \leq 
    \normL{w(t_0)}^2\theta e^{\frac{cM_1^2}{\nu}(\kappa - \tau)}
    \\&\notag
    \leq 
    R^2\theta e^{\frac{cM_1^2}{\nu}\kappa}
\end{align}

Now we note that in order to obtain exponential decay we require that 
\begin{align}
    \theta e^{\frac{cM_1^2}{\nu}\kappa} < 1,
\end{align}
which holds by \cref{decay assumption}.

Thus $\norm{w(t)}_{L^2}^2 \leq R^2$ for any time $t\leq t_0 + \kappa$, and therefore $\widetilde{t}> t_0 +\kappa$. We can iterate our argument to obtain that 

\begin{align}
\norm{w(t_n)}_{L^2}^2 \leq \norm{w(t_0)}_{L^2}^2 \sigma^n    
\end{align}
where 
\begin{align}\sigma := \theta e^{\frac{cM_1^2}{\nu}\kappa} < 1. \end{align}
Thus we obtain exponential convergence of $v$ to $u$ in $L^2$.

\end{proof}

We now consider the case of general interpolants $I_{h,\kappa}$ with a first-order explicit time-extrapolation satisfying \cref{interpolator_time}. 
We note that while the inequality \cref{decay assumption} appears to be missing from the theorem statement, a similar statement is necessary (see \cref{theta-H1}). For the case of general interpolants we opted to satisfy \cref{theta-H1} by incorporating explicit bounds on $\mu$, $\kappa$, and $\mu\tau$.


\begin{theorem}\label{thm-H1}
Let $u$ and $v$ be solutions to \cref{NSE} and \cref{NSE-v}, respectively, with initial data $u_0, v_0 \in V$ and $v$ defined on $[t_0,\infty)$.
Suppose $I_{h,\kappa}$ satisfies \cref{interpolator_time}, and the following conditions are satisfied with $0<\tau \leq \kappa$, 
\begin{align}
    \mu 
    &
    > 
    \max\left\{ \frac{CM_1M_2}{\nu\lambda_1^{1/2}},\,\, \frac{CM_0^2M_1^2}{\nu^3},\,\, \frac{CM_1M_2}{\nu\lambda_1^{1/2}} + \frac{CM_1^4}{\nu^3\lambda_1}, 4K_2 \right\},\\
    h 
    &
    < 
    \frac{1}{2}\sqrt{\frac{\nu}{c_0\mu}},\\
    \kappa \label{kappa-bound-H1}
    &
    < 
    \min\left\{ \frac{e^{-K_2}}{4C\mu^2 K_1}, \frac{C\lambda_1^{1/2}\nu^{1/2}}{\mu^{3/2}}, \frac{\frac{3K_2^2}{2} + \mu^2  K_1 - \sqrt{D}}{K_2^3}\right\},\\&\text{where}\,\, D=\Big(\frac{3K_2^2}{2} + \mu^2  K_1\Big)^2 - 2K_2^4>0,\notag\\
    \text{and}\quad\mu\tau
    &
    \geq 
    4K_2\kappa \label{mu-tau-log bound} ,
\end{align}
 then $\normH{u-v}$ decays to zero exponentially fast in time.

Here $C$ denotes an absolute constant, and $K_1,  K_2>0$, depending only on $\nu, \lambda_1, M_0, M_1, M_2$ and some absolute constants, are specified in \cref{K-1} and \cref{K-2}. 

\end{theorem}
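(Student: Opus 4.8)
The plan is to run an enstrophy-level (i.e. $H^1$) version of the argument used for \Cref{thm-L2}, replacing the $L^2$ pairing with the pairing against $Aw$, and replacing the $V'$ time-derivative bounds with $L^2$ time-derivative bounds (which is what forces the higher regularity $M_2$ and the more stringent smallness conditions). As before set $w:=v-u$, so that $w$ solves \cref{NSE-w} with the Fourier projection replaced by the general interpolant, i.e. the feedback reads $-\mu\sum_n I_{h,\kappa}[w]\chi_{n,\tau}$. Pairing this equation with $Aw$ and using $(\pd{w}{t},Aw)=\tfrac12\dv{}{t}\normH{w}^2$ gives
\begin{align}\label{H1-balance}
\tfrac12\dv{}{t}\normH{w}^2 + \nu\normL{Aw}^2 &= -\ip{B(w,u)}{Aw} - \ip{B(u,w)}{Aw} - \ip{B(w,w)}{Aw} \notag\\
&\quad - \mu\sum_{n=0}^\infty \ip{I_{h,\kappa}[w]}{Aw}\chi_{n,\tau}.
\end{align}
The cubic term vanishes by the enstrophy identity \cref{enstrophy_miracle}. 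I would dispose of the two remaining nonlinear terms using the bilinear estimates \cref{B:442} and \cref{B:inf22} together with the bounds of \Cref{thm1}; each produces a factor $\tfrac\nu8\normL{Aw}^2$ absorbed by the dissipation, plus a multiple of $\normH{w}^2$ whose dominant coefficient is $\tfrac{CM_1M_2}{\nu\lambda_1^{1/2}}$ (equivalently, one may first collapse the pair to $-\ip{B(w,w)}{Au}$ via the Jacobi identity \cref{jacobi}). This fixes the leading behaviour of $K_2$.

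For the feedback term I would split $I_{h,\kappa}[w]=w-(w-I_{h,\kappa}[w])$ to extract the damping: on $[t_n,t_n+\tau)$, since $\ip{w}{Aw}=\normH{w}^2$,
\begin{align*}
-\mu\ip{I_{h,\kappa}[w]}{Aw} = -\mu\normH{w}^2 + \mu\ip{w-I_{h,\kappa}[w]}{Aw} \leq -\mu\normH{w}^2 + \tfrac\nu8\normL{Aw}^2 + \tfrac{C\mu^2}{\nu}\normL{w-I_{h,\kappa}[w]}^2.
\end{align*}
Invoking \cref{interpolator_time}, the spatial part contributes $\tfrac{C\mu^2 c_0 h^2}{\nu}\normH{w}^2$, which is absorbed by the damping $-\mu\normH{w}^2$ precisely because of the hypothesis $h<\tfrac12\sqrt{\nu/(c_0\mu)}$, leaving an effective damping $-\tfrac\mu2\normH{w}^2$ and a leftover temporal piece $\tfrac{C\mu^2\tau}{\nu}\int_{t_n}^t\normL{\pd{w}{s}}^2\,ds$.

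The crux --- and the step I expect to be hardest --- is controlling this $L^2$-in-space time-derivative integral, since \cref{interpolator_time} now measures $\pd{w}{s}$ in $L^2$ rather than in $V'$ as in \Cref{thm-L2}. Reading $\pd{w}{s}$ off \cref{NSE-w} and estimating $\normL{B(w,u)}$, $\normL{B(u,w)}$, $\normL{B(w,w)}$ via the Ladyzhenskaya inequality \cref{Ladyzhenskaya} and $\normLp{4}{\nabla u}\leq cM_1^{1/2}M_2^{1/2}$, then squaring and integrating, yields an inequality of the form $\int_{t_n}^t\normL{\pd{w}{s}}^2 \leq c\int_{t_n}^t\Phi + c\mu^2\tau^2\int_{t_n}^t\normL{\pd{w}{s}}^2$, where $\Phi$ collects $\nu^2\normL{Aw}^2$, $\normH{w}^2$ and $\mu^2\normL{w}^2$ with the coefficients recorded in $K_1$. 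The self-referential term is absorbed for $\tau$ small --- this is the role of the branch $\kappa<C\lambda_1^{1/2}\nu^{1/2}\mu^{-3/2}$ of \cref{kappa-bound-H1} --- and the remaining $\int\Phi$ is converted into a multiple of $\normH{w(t_n)}^2$ using the enstrophy integral bound $\int_{t_0}^t\normL{Aw}^2\,ds\leq\tfrac2\nu\normH{w(t_0)}^2$, the $H^1$ analogue of \cref{H1_integral_bound} obtained from \cref{H1-balance} on the interval where $\normH{w}\leq 2R_1$ (with $R_1:=2M_1$). The Young-splittings performed here are what generate the remaining lower bounds on $\mu$, e.g. $\tfrac{CM_0^2M_1^2}{\nu^3}$.

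Assembling everything turns \cref{H1-balance} into a differential inequality
\begin{align*}
\dv{}{t}\normH{w}^2 \leq -\mu\normH{w}^2\chi_{n,\tau} + K_2\normH{w}^2 + C\mu^2 K_1\,\chi_{n,\tau}\int_{t_n}^t\normH{w(s)}^2\,ds,
\end{align*}
and I would then mimic \Cref{thm-L2} on the two subintervals: on $[t_n,t_n+\tau)$ the damping dominates (using $\mu>4K_2$), giving a contraction factor $\theta_{H^1}:=e^{-\frac\mu2\tau}+K_1\mu\tau(1-e^{-\frac\mu2\tau})$; on $[t_n+\tau,t_{n+1})$ the feedback is off and $\normH{w}^2$ grows by at most $e^{K_2(\kappa-\tau)}$. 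The net per-window factor is $\theta_{H^1}e^{K_2\kappa}$, and conditions \cref{kappa-bound-H1} and \cref{mu-tau-log bound} are exactly what force
\begin{align}\label{theta-H1}
\left(e^{-\frac\mu2\tau}+K_1\mu\tau\bigl(1-e^{-\frac\mu2\tau}\bigr)\right)e^{K_2\kappa}<1,
\end{align}
where $\mu\tau\geq 4K_2\kappa$ makes the damping beat the growth and the cubic branch of \cref{kappa-bound-H1} (with discriminant $D$) is the algebraic threshold guaranteeing \cref{theta-H1}. Finally, a continuity/bootstrap argument --- defining $\widetilde t:=\sup\{s:\sup_{[t_0,s]}\normH{w}<2R_1\}$, showing $\widetilde t>t_0+\kappa$, and iterating --- yields $\normH{w(t_n)}^2\leq\normH{w(t_0)}^2\sigma^n$ with $\sigma:=\theta_{H^1}e^{K_2\kappa}<1$, which is the claimed exponential $H^1$ decay. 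The collected constants are
\begin{align}
K_2 &:= \frac{CM_1M_2}{\nu\lambda_1^{1/2}} + \frac{CM_1^4}{\nu^3\lambda_1}, \label{K-2}\\
K_1 &:= c\left(\nu^2 + \frac{M_0M_1}{\lambda_1^{1/2}} + \frac{R_1^2}{\lambda_1} + \frac{\mu^2}{\lambda_1^2}\right). \label{K-1}
\end{align}
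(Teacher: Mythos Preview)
Your proposal is correct and follows essentially the same approach as the paper's proof: pair with $Aw$, drop $B(w,w)$ via \cref{enstrophy_miracle}, split the feedback, invoke \cref{interpolator_time}, bound $\normL{\partial_s w}$ from the equation, bootstrap with $R_1=2M_1$, and show the per-window factor is $<1$. The only adjustment needed is in the explicit constants you record at the end: the paper estimates $(B(w,u),Aw)$ and $(B(u,w),Aw)$ directly via Ladyzhenskaya (not via Jacobi), obtaining $K_2=\frac{cM_1M_2}{\nu\lambda_1^{1/2}}+\frac{cM_0^2M_1^2}{\nu^3}$, and the $K_1$ arising from the $L^2$ time-derivative bound collects terms $5\nu$, $\frac{M_1M_2}{\nu\lambda_1^{3/2}}$, $\frac{M_0^2M_1^2}{\nu^3\lambda_1^{1/2}}$, $\frac{R_1^4}{\nu^3\lambda_1^{3/2}}$, $\frac{\mu^2}{\nu\lambda_1^2}$, $\frac{\mu}{4\lambda_1}$---your $K_1$ looks carried over from the $L^2$ proof rather than recomputed at the $H^1$ level.
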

\begin{proof}
Subtracting \cref{NSE} from \cref{NSE-v}, and denoting $w:=v-u$, we obtain 
  \begin{align}\label{NSE-diff}
    \frac{dw}{dt} + \nu Aw + B(w,u) + B(u,w) + B(w,w) = -\mu \sum_{n=0}^\infty P_{\sigma}I_{h,\kappa}(w(t_n))\chi_{n,\tau}.
  \end{align}

We multiply \cref{NSE-diff} by $Aw = -\Delta w$, and integrate by parts on $\Omega$, to obtain 
    \begin{align}\label{H1-estimate}
      & \quad
      \frac{1}{2}\frac{d}{dt}\normH{w}^2 
      + 
      \nu\normL{\Delta w}^2
      +
            \mu \sum_{n = 0}^\infty\norm{w}^2_{H^1}\chi_{n,\tau}
      \\&\notag
      = 
      (B(w,u),\lap w)_{L^2} 
      + 
      (B(u,w),\lap w)_{L^2} 
      - 
      \mu \sum_{n = 0}^\infty(w - I_{h,\kappa}(w), \Delta w)_{L^2}\chi_{S_{\tau}} 
    \end{align}
  where we used \cref{B:inf22}. 
We note that the summation of feedback-control terms will have at most one non-zero term for any given value of $t$, thus we omit the summation of these terms to simplify our notation moving forwards.
  
Next, we estimate the two nonlinear terms given on the right side of \cref{H1-estimate}.
For the first term, using H\"older's and Poincar\'e inequalities, as well as \Cref{thm1}, we estimate
\begin{align}
  |(B(w,u), \Delta w)_{L^2}|
  &\leq
  \fournorm{w}\fournorm{\nabla u}\normL{\Delta w}
  \leq
  \normL{w}^{1/2}\normH{w}^{1/2}\normH{u}^{1/2}\normL{\Delta u}^{1/2}\normL{\Delta w}
  \\&\notag
  \leq
  \sqrt{M_1 M_2}\frac{\normH{w}}{\lambda_1^{1/4}}\normL{\Delta w}
  \leq
  \frac{c M_1 M_2}{\nu\lambda_1^{1/2}}\normH{w}^2 + \frac{\nu}{16}\normL{\Delta w}^2.
\end{align}

Performing a similar procedure on the second term we obtain the following estimate:
    \begin{align}
      |(B(u,w), \Delta w)_{L^2}|
      &\leq
      \fournorm{u}\fournorm{\nabla w}\normL{\Delta w}
      \leq
      \normL{u}^{1/2}\normH{u}^{1/2}\normH{w}^{1/2}\normL{\Delta w}^{3/2}
      \\&\notag
      \leq
      \sqrt{M_0 M_1}\normH{w}^{1/2}\normL{\Delta w}^{3/2}
      \leq
      \frac{c M_0^2 M_1^2}{\nu^3}\normH{w}^2 + \frac{\nu}{16}\normL{\Delta w}^2.
    \end{align}

In order to estimate the remaining terms, we apply H\"older's inequality, Young's inequality, as well as \cref{interpolator_time}. This yields the following
\begin{align}
    -\mu (w - I_{h,\kappa} w , \Delta w) &\leq \frac{\mu^2 c_0 h^2}{\nu} \norm{w}_{H^1}^2 + \frac{c\mu^2 \tau }{\nu}\int_{t_n}^t\norm{\frac{dw(s)}{ds}}^2_{L^2} ds + \frac{\nu}{4}\norm{\Delta w}_{L^2}^2\\
    &\leq\notag \frac{\mu}{4} \norm{w}_{H^1}^2 + \frac{c\mu^2\tau }{\nu}\int_{t_n}^t\norm{\frac{dw(s)}{ds}}^2_{L^2} ds + \frac{\nu}{4}\norm{\Delta w}_{L^2}^2.
\end{align}

We thus need to estimate the above time-derivative. For fixed $t\in[t_n, t_n+\tau)$, using \cref{NSE-diff}, we estimate 
  \begin{align}
  \allowdisplaybreaks\label{time_derivative}
    &\quad
    \left\Vert\frac{dw}{ds}\right\Vert_{L^2}
    \\&\notag
    \leq
    \nu\normL{\Delta w} + \normL{B(w, u)} + \normL{B(u, w)} + \normL{B(w, w)}
    \\&\quad\notag
    +\mu\normL{I_{h,\kappa}(w)- w} + \mu\normL{w} 
    \\&\notag
    \leq
    \nu\normL{\Delta w(s)} + \normL{B(w(s), u(s))} + \normL{B(u(s), w(s))} + \normL{B(w(s), w(s))}
    \\&\quad\notag
    +\mu\normL{I_{h,\kappa}(w)- w} + \mu\normL{w} 
    \\&\notag
    \leq
    \nu\normL{\Delta w(s)} 
    + 
    c\normL{w(s)}^{1/2}\normH{w(s)}^{1/2}\normH{u(s)}^{1/2}\normL{\Delta u(s)}^{1/2}
    \\&\quad\notag
    +
    c\normL{u(s)}^{1/2}\normH{u(s)}^{1/2}\normH{w(s)}^{1/2}\normL{\Delta w(s)}^{1/2}
    +
    c\normL{w(s)}^{1/2}\normH{w(s)}\normL{\Delta w(s)}^{1/2}
    \\&\quad    \notag
    +\mu\normL{I_{h,\kappa}(w)- w} + \mu\normL{w} 
    \\&\notag
    \leq
    \nu\normL{\Delta w(s)} 
    +
    \frac{c\sqrt{M_1M_2}}{\lambda_1^{1/4}}\normH{w(s)}
    +
    c\sqrt{M_0M_1}\normH{w(s)}^{1/2}\normL{\Delta w(s)}^{1/2}
    \\&\quad\notag
    +
    \frac{c}{\lambda_1^{1/4}}\normH{w(s)}^{3/2}\normL{\Delta w(s)}^{1/2}
    +\mu\normL{I_{h,\kappa}(w)- w} + \mu\normL{w} 
    \\&\notag
    \leq
    \nu\normL{\Delta w(s)} 
    +
    \frac{c\sqrt{M_1M_2}}{\lambda_1^{1/4}}\normH{w(s)}
    +
    c\sqrt{M_0M_1}\normH{w(s)}^{1/2}\normL{\Delta w(s)}^{1/2}
    \\&\quad\notag
    +
    \frac{c}{\lambda_1^{1/2}\nu}\normH{w}^3 + \nu\normL{\Delta w}
    +\mu\normL{I_{h,\kappa}(w)- w} + \mu\normL{w}.
  \end{align}

Now, squaring both sides of the inequality above, we obtain:
\begin{align}
    \left\Vert\frac{dw}{ds}\right\Vert_{L^2}^2
    &\leq 
    4\nu^2\normL{\Delta w(s)}^2 
    +
    \frac{cM_1M_2}{\lambda_1^{1/2}}\normH{w(s)}^2
    +
    cM_0M_1\normH{w(s)}\normL{\Delta w(s)}
    \\&\quad\notag
    +
    \frac{c}{\lambda_1\nu^2}\normH{w}^6
    +\mu^2\normL{I_{h,\kappa}(w)- w}^2 + \mu^2\normL{w}^2
    \\
    &\leq \notag
    5\nu^2\normL{\Delta w(s)}^2 
    +
    \left(\frac{cM_1M_2}{\lambda_1^{1/2}} +\frac{cM_0^{2}M_1^{2}}{\nu^2}  +\frac{\mu \nu}{4} + \frac{\mu^2}{\lambda_1} + \frac{c}{\lambda_1\nu^2} \normH{w}^4\right) \normH{w(s)}^2
    \\&\quad\notag
    +
    c\mu^2\tau \int_{t_n}^t\norm{\frac{dw(s)}{ds}}_{L^2}^2 ds, 
\end{align}
where in the second inequality above, we used the interpolation inequality
 \ref{interpolator_time} as well as both Young's and Poincar\'e's inequalities.

Integrating the above equation in time with respect to $s$ from $t_n$ to $t \in [t_n, t_n+\tau)$ yields
\begin{align}
&\quad
    \int_{t_n}^t \left\Vert\frac{dw}{ds}\right\Vert_{L^2}^2 ds
    \\&\leq  \notag
    \int_{t_n}^t \Big(  5\nu^2\normL{\Delta w(s)}^2 
    +
    \left(\frac{cM_1M_2}{\lambda_1^{1/2}} +\frac{cM_0^{2}M_1^{2}}{\nu^2}  +\frac{\mu \nu}{4} + \frac{\mu^2}{\lambda_1} + \frac{c}{\lambda_1\nu^2} \normH{w}^4\right) \normH{w(s)}^2
    \Big) ds\\
    &\quad\notag +  c\mu^2 \tau^2 \int_{t_n}^t\norm{\frac{dw(s)}{ds}}_{L^2}^2 ds.
\end{align}

Observing the condition involving $\mu$, $\tau$ and $\kappa$,  $c\mu^2\tau^2 <\tfrac{1}{2}$ holds. Therefore 
   \begin{align}
     \int_{t_n}^{t}\Big\Vert\frac{dw(s)}{ds}\Big\Vert_{L^2}^2\,ds
     \leq
     c\int_{t_n}^{t}\psi(s)\,ds,
   \end{align}
where 
  \begin{align}
    \psi(s) 
    &
    = 
    5\nu^2\normL{\Delta w(s)}^2 
    +
    \left(\frac{cM_1M_2}{\lambda_1^{1/2}} +\frac{cM_0^{2}M_1^{2}}{\nu^2}  +\frac{\mu \nu}{4} + \frac{\mu^2}{\lambda_1} + \frac{c}{\lambda_1\nu^2} \normH{w}^4\right) \normH{w(s)}^2.
  \end{align}

Combining all the above estimates in \cref{H1-estimate}, we obtain for all $t\in[t_n, t_n+\tau)$, 
  \begin{align}
    \frac{d}{dt}\normH{w(t)}^2
    +
    \nu\normL{Aw(t)}^2
    &
    \leq
   \left(-\mu + \frac{c M_1 M_2}{\nu\lambda_1^{1/2}} + \frac{c M_0^2 M_1^2}{\nu^3} + \frac{\mu}{4}\right)\normH{w(t)}^2\chi_{n,\tau}
    \\&\quad\notag
    +
    \frac{c\mu^2\tau}{\nu}\left(\int_{t_n}^{t}\psi(s)\,ds\right)\chi_{n,\tau},
  \end{align}
This inequality is further simplified to 
  \begin{align}\label{ineq1}
    \frac{d}{dt}\normH{w(t)}^2
    +
    \nu\normL{Aw(t)}^2
    &
    \leq
    -\frac{\mu}{2}\normH{w(t)}^2\chi_{n,\tau}
    +
    \frac{c\mu^2\tau}{\nu}\int_{t_n}^{t}\psi(s)\,ds\chi_{n,\tau},
  \end{align}
where we used our condition on $\mu$.

We now argue that 
\begin{align} 
\normH{w(t)}\leq 2M_1 =: R_1\,\,\,\text{for all}\,\,\,t\geq t_0. 
\end{align}
To do so, we first define $\widetilde{t}$ as follows:
\begin{align} 
\widetilde{t} := \sup\{\widetilde{s}\in[t_0, t_0+\tau): \sup\limits_{t\in[t_0, \widetilde{s}]}\normH{w(t)}\leq R_1\}. 
\end{align} 

To show this we will use a bootstrapping argument. In particular we will show $\widetilde{t}\geq t_0+\tau$, and thus a similar argument can be applied to the next time interval $[t_0+\tau, t_1)$. These arguments can then be repeated inductively yielding the desired conclusion.

First, let us assume that $\widetilde{t}\in [t_0,t_0+\tau]$. Integrating \cref{ineq1} in time from $t_0$ to $t\in (t_0, \widetilde{t}\,]$ we obtain:
  \begin{align*}
    &
    \normH{w(t)}^2 - \normH{w(t_0)}^2 + \nu\int_{t_0}^{t}\normL{\Delta w(s)}^2\,ds
    \leq
    -\frac{\mu}{2}\int_{t_0}^{t}\normH{w(s)}^2
    \\&\quad\notag
    +
    \frac{c\mu^2\tau^2}{\nu}\int_{t_0}^{t}
    \left(
    5\nu^2\normL{\Delta w(s)}^2 
    +
    \left(\frac{cM_1M_2}{\lambda_1^{1/2}} +\frac{cM_0^{2}M_1^{2}}{\nu^2}  +\frac{\mu \nu}{4} + \frac{\mu^2}{\lambda_1} + \frac{cR_1^4}{\lambda_1\nu^2}\right) \normH{w(s)}^2
    \right)\,ds.
  \end{align*}
Collecting similar terms yields the following
\begin{align}
&
\normH{w(t)}^2 - \normH{w(t_0)}^2 + \nu\left(1 - 5c\mu^2\tau^2 \right) \int_{t_0}^t\norm{\lap w(s)}^2 ds
\\
&\leq \notag
-\left(
\frac{\mu}{2} 
-  \frac{c\mu^2\tau^2M_1M_2}{\lambda_1^{1/2}\nu} 
- \frac{c\mu^2\tau^2M_0^{2}M_1^{2}}{\nu^3}  
- \frac{c\mu^3\tau^2 }{4} 
- \frac{c\mu^4\tau^2}{\lambda_1\nu}
- \frac{c\mu^2\tau^2R_1^4}{\lambda_1\nu^3} 
\right) \int_{t_0}^t\normH{w}^2ds.
\end{align}

We now use our assumptions on the parameters $\mu$ and $\tau$ to obtain the following inequality
\begin{align}
    \normH{w(t)}^2 - \normH{w(t_0)}^2 + \frac{\nu}{4}\int_{t_0}^{t}\normL{\Delta w(s)}^2\,ds
    \leq
    -\frac{\mu}{8}\int_{t_0}^{t}\normH{w(s)}^2\,ds. 
  \end{align}

Note that this now implies in particular that 
\begin{align} 
\int_{t_0}^{t}\normL{\Delta w(s)}^2\,ds \leq \frac{c}{\nu}\normH{w(t_0)}^2. 
\end{align}  
Inserting the above bound into \cref{ineq1}, results in the following: 
  \begin{align}
    &
    \frac{d}{dt}\normH{w(t)}^2
    \\&\notag
    \leq
    -\frac{\mu}{2}\normH{w(t)}^2
    +
    \frac{c\mu^2\tau}{\nu}\Big( 5\nu^2 + \frac{M_1M_2}{\lambda_1^{3/2}} + \frac{M_0^{2}M_1^{2}}{\nu^2\lambda_1} + \frac{R_1^4}{\nu^2\lambda_1^{2}} + \frac{\mu^2}{\lambda_1^2} + \frac{\mu\nu}{4\lambda_1} \Big)\int_{t_0}^{t}\normL{\Delta w(s)}^2\,ds
    \\&\notag
    \leq
    -\frac{\mu}{2}\normH{w(t)}^2
    +
    c\mu^2\tau\Big( 5\nu + \frac{M_1M_2}{\nu\lambda_1^{3/2}} + \frac{M_0^{2}M_1^{2}}{\nu^3\lambda_1^{1/2}} + \frac{R_1^4}{\nu^3\lambda_1^{3/2}} + \frac{\mu^2}{\nu\lambda_1^2} + \frac{\mu}{4\lambda_1} \Big)\normH{w(t_0)}^2.
  \end{align}
Now integrating the above inequality in time from $t_0$ to $t\in[t_0, \widetilde{t})$ and denoting by 
\begin{align}\label{K-1}
     K_1 := c\left( 5\nu + \frac{M_1M_2}{\nu\lambda_1^{3/2}} 
    + 
    \frac{M_0^{2}M_1^{2}}{\nu^3\lambda_1^{1/2}} 
    +
    \frac{R_1^4}{\nu^3\lambda_1^{3/2}} 
    + 
    \frac{\mu^2}{\nu\lambda_1^2} 
    +
    \frac{\mu}{4\lambda_1}
    \right) 
,
\end{align}
we obtain 
\begin{align}\label{H1_bound_on_w}
    \normH{w(t)}^2
    \leq
    \normH{w(t_0)}^2 e^{-\frac{\mu}{2}(t-t_0)} 
    +
    2\mu\tau K_1 \big(1 - e^{-\frac{\mu}{2}(t-t_0)}\big) \normH{w(t_0)}^2,
\end{align}
which in turn implies
\begin{align}
    \normH{w(t)}^2
    \leq
    R_1^2 e^{-\frac{\mu}{2}(t-t_0)} 
    +
    2\mu\tau K_1 R_1^2\big(1 - e^{-\frac{\mu}{2}(t-t_0)}\big).
\end{align}

We note that with small enough $\tau$ such that $\mu\tau K_1 \leq \frac{1}{4}$ holds we also have $2\mu\tau K_1 R_1^2<R_1^2$.
Thus we find that 
\begin{align} \normH{w(t)}^2 \leq R_1^2, \,\,\,\text{ for all } t\in[t_0, \widetilde{t}\,], \end{align}
holds, and so the bound $\normH{w(t)} \leq R_1$ can be extended beyond $t_0+\tau$. 

It remains to show that the bound $\normH{w(t)} \leq R_1$ is still valid beyond $t_0+\tau$. 
In order to do so, we follow similarly to the argument above; integrating \cref{H1-estimate} in time from $t_0+\tau$ to $t\in[t_0+\tau, t_1)$. As on the interval of integration $\chi_{n,\tau}(t) = 0$, and in view of \cref{ineq1}, we obtain
  \begin{align}
    \frac{d}{dt}\normH{w(t)}^2
    +
    \nu\normL{\Delta w(t)}^2
    &
    \leq
    K_2\normH{w(t)}^2
  \end{align} 
  where 
\begin{align}\label{K-2}
    K_2 
    := 
    \frac{cM_1M_2}{\nu\lambda_1^{1/2}} 
    + 
    \frac{cM_0^2M_1^2}{\nu^3}.
\end{align}
Now dropping the second term on the left side of the above inequality, and using Gr\"onwall's inequality, the \cref{H1_bound_on_w}, as well as the condition $0<\tau=t-t_0\leq\kappa$, we estimate
\begin{align}
    \normH{w(t)}^2
    &
    \leq
    \normH{w(t_0+\tau)}^2\Big(e^{K_2(t - (t_0+\tau))}\Big)
    \\&\notag
    \leq
    \normH{w(t_0)}^2\Big( e^{-\frac{\mu}{2}\tau} + 2\mu\tau K_1(1 - e^{-\frac{\mu}{2}\tau}) \Big)e^{K_2
    (\kappa - \tau)}.
\end{align}

To obtain the decay of $\norm{w(t)}_{H^1}^2$, we now show that the following inequality holds:
\begin{align}\label{theta-H1}
   \theta:= 
   \Big( e^{-\frac{\mu}{2}\tau} + 2\mu\tau K_1(1 - e^{-\frac{\mu}{2}\tau}) \Big)e^{K_2\kappa}  
   < 1.
\end{align}

It suffices to show that the following two inequalities hold simultaneously: 
\begin{align}
\label{H1-small1}
e^{-\frac{\mu}{2}\tau}
\leq
e^{-2K_2\kappa}
\end{align}
as well as 
\begin{align}
\label{H1-small2}
2\mu\kappa  K_1 (1 - e^{-\frac{\mu}{2}\kappa})
\leq
e^{-K_2\kappa} - e^{-2K_2\kappa}.
\end{align}
In view of our condition on the lower bounds of $\mu\tau$, \cref{H1-small1} indeed holds, and note that the last bound on $\mu$ ensures that 
\begin{align}
\frac{\mu}{4K_2}\tau
<
\tau
<
\kappa
\end{align}
remains valid. 
We note that one may choose to increase $\mu$ should one desire a particular value of $\tau$ for fixed $\kappa$.
To enforce \cref{H1-small2}, we estimate the left side as
\begin{align}
2\mu\kappa  K_1 (1 - e^{-\frac{\mu}{2}\kappa})
\leq
2\mu\kappa  K_1 \Big(\frac{\mu}{2}\kappa\Big)
=
\mu^2  K_1\kappa^2
\end{align}
and the right side as
\begin{align}
e^{-K_2\kappa} - e^{-2K_2\kappa}
&
=
e^{-K_2\kappa}\big(1 - e^{-K_2\kappa}\big)
\geq
(1 - K_2\kappa)\Big(K_2\kappa - \frac{K_2^2}{2}\kappa^2\Big).
\end{align}
By the last bound on $\kappa$ in \cref{kappa-bound-H1}, the following quadratic inequality about $\kappa$ holds:
\begin{align}
\frac{K_2^3}{2}\kappa^2 - \Big(\frac{3K_2^2}{2}+\mu^2  K_1\Big)\kappa + K_2
\geq 
0,
\end{align}
which is in fact equivalent to 
\begin{align}
\mu^2  K_1\kappa 
\leq
K_2(1 - K_2\kappa) \Big(1 - \frac{K_2}{2}\kappa\Big).
\end{align}
Thus, \cref{H1-small2} is proved and so it follows that $\theta < 1$.

Inductively, combining the bounds of $\normH{w(t)}$ on $[t_n, t_n+\tau)$ and on $[t_n+\tau, t_{n+1})$, for $n=0,1,2,\cdots$ we obtain 
\begin{align}
\normH{w(t)}\leq \theta^{n}R_1\quad\text{where}\quad0\leq \theta<1, 
\end{align}  
which implies $\normH{v(t) - u(t)}=\normH{w(t)} \to 0$ as $t\to\infty$. 

\end{proof}

\begin{remark}

The above theorem can be extended and applied to the case of interpolants $I_{h,\kappa}$ with a higher-order explicit time-extrapolation component.
We note that in the above theorem our choice of time-extrapolation is restricted to explicit first order time-extrapolants, as we require a bound on $\int_{t_n}^t \norm{\pd{w}{s}(s)}_{V'}^2 ds$. higher-order time-extrapolants require estimates on higher-order time-derivatives, which can be established inductively. Proof of this statement for higher-order methods is beyond the scope of this work, but will be explored in the sequel.

\end{remark}

\section{Numerical Results}
\noindent
In this section we explore how our alterations to the AOT algorithm perform when applied to the 3D incompressible Navier-Stokes equations.

\subsection{Numerical Scheme and Technical Details}

In order to test the AOT algorithm  an ``identical twin'' experimental design was utilized. This is a standard experiment used to test methods of data assimilation, which involves running two separate simulations. This first simulation is used as the reference solution, which is used  to both generate observational data and as the solution we attempt to recover using the AOT algorithm. Next, we start the twin simulation, a second simulation that we begin from a different set of initial data, in this case we initialize the twin simulation to be identically $0$. Using the AOT algorithm we assimilate the observational data from the reference solution into the twin simulation and calculate the error between the two. The phrase identical twin here refers to how both simulations use the same underlying physics, even though the initial data for both simulations is different. For a detailed look at twin experiments see, e.g., \cite{yu2019twin}.

Simulations of 3D Navier-Stokes equations are performed using a fully dealiased pseudospectral code on the domain $\mathbb{T}^3 = [0,1]^3$. 
Specifically, the derivatives are calculated in Fourier space with products computed in physical space utilizing the the 2/3's dealiasing rule. 
For spatial resolution $N = 512^3$, we use a viscosity of $\nu = 3.58979\times 10^{-4}$, which we find to be in the critical regime for this resolution.
The time stepping has been done using a fully explicit fourth-order Runge-Kutta scheme with the time step restricted in accordance with the advective CFL condition and with the AOT term treated explicitly. Fourier projection onto the first $N$ wavenumbers is utilized as the spatial interpolant for the AOT term, with piecewise constant interpolation used as the time-extrapolant.

For our simulations we first ramp up $u$ from randomized initial state that is rescaled in order to fit a specified energy profile for the spectrum.
The forcing was Taylor-Green, with components given  explicitly by:
\begin{equation}\label{TG}
\begin{cases}
f_1(x,y,z) &= \sin(2\pi x) \cos(2\pi y)\cos(2\pi z),\\
f_2(x,y,z) &= -\cos(2\pi x) \sin(2\pi y)\cos(2\pi z)\\
f_3(x,y,z) &= 0
\end{cases}
\end{equation}
We simulate the evolution of this equation until time $t = 15$, at which time the spectrum of the solution appears to stabilize. 

In addition to the data assimilation scheme we propose, and the one given in \cite{Foias_Mondaini_Titi_2016}, we also consider the scheme proposed in \cite{Hayden_Olson_Titi_2011} and expanded upon in \cite{Celik_Olson_Titi_2019}, which we will refer to as the Hayden-Olson-Titi (HOT) algorithm. Instead of applying a feedback-control term, the HOT algorithm directly replaces the low Fourier modes of $v$ with those of $u$ at each of the observed times $t_n$.  More explicitly, the scheme is given by:
\begin{equation}
\begin{cases}
v_t &= F(v), \text{ for } t\in (t_{n},t_{n+1})\\
P_N v(t_n) &= P_N u(t_n),
\end{cases}
\end{equation}
where $P_N$ is the projection onto the first $N$ Fourier modes.

We view this heuristically as an example of setting $\tau = 0$, as when $\tau$ decreases to $0$, $\mu$ increases proportionally, and thus the strength of the forcing on the low Fourier modes increases. We heuristically view the direct replacement of the low Fourier modes of $v$ with those of $u$ as a sort of limit of the forcing behavior of the feedback control term.

In order to test these method of data assimilation, we first perform a parameter search in order to determine the optimal choice of truncation number $N$ and $\mu$. To first determine the value of $N$, we use the HOT algorithm to find the smallest value of $N$ for which we obtain convergence within a reasonable amount of simulation time with $\kappa$ fixed. We utilize the HOT algorithm to optimize $N$ by itself as $\mu$ is not present in this scheme. Then, once $N$ has been selected we then use our AOT variant with $\tau = \kappa$ and conduct a parameter search to find an optimal choice of $\mu$. In order to save on computational time and resources for the high resolution simulations, we use the truncation number $N = 100$. We acknowledge that this choice is not optimal; nevertheless, we still choose it in order to reduce computational time for the AOT algorithm. For spatial resolution $512^3$ we find that the choice of $\mu = 5$ is the optimal choice when $\tau = \kappa$ with truncation number $N = 100$.

\subsection*{Results}
Once the truncation number and $\mu$ are chosen for the case of $\tau = \kappa$, we then run simulations for various $\mu$ in the case of $\tau = \frac{\kappa}{10} = \Delta t$. Here $\Delta t = 0.0001$ is given to be our time-step, with $\kappa = 0.001 = 10\cdot \Delta t$. The results our our simulations can be seen in \cref{fig:3D error,fig:vortex tubes}.

In \cref{fig:vortex tubes} we show snapshots of the normalized magnitudes of the vorticities of $u$, $v$, and $u-v$ at various points in time. From this figure one can see that our $v$ evolves quickly from the initial state to mimic the vortex tubes seen in $u$. 
We note that the vorticities of $u$ and $v$ pictured in this figure are normalized, which is done to provide a consistent coloring for all plots across multiple time-steps. We note that the unscaled version of \cref{fig:vortex tubes} qualitatively shows the same picture, as the difference of the vorticities of $u$ and $v$ is small enough that by time $t=0.11$ it would not be visible without significant rescaling. Moreover, one can see that the difference in vorticity appears to be mostly constant throughout the domain, with slight increases around vortex tubes.

In \cref{fig:3D error} we see the $L^2$ error of our algorithm for varying choices of $\tau$ and $\mu$. In particular, we choose $\tau = \kappa$ with $\mu = 5$ as our baseline for comparison. For this choice of $\tau$, our algorithm reduces to that given in \cite{Foias_Mondaini_Titi_2016}, while $\mu = 5$ is found to be approximately optimal for this particular $\tau$ value. For the remainder of our test we utilize $\tau = \kappa/10 = \Delta t$ and vary our choice of $\mu$.
We found that when $\mu$ is chosen to preserve the impulse of the forcing i.e. $\mu = 5\frac{\kappa}{\tau} = 50$, we obtain similar convergence results to the case for $\mu = 5$ and $\tau = \kappa$. 
However, we also find that when $\tau < \kappa$ we can obtain faster convergence by utilizing a larger $\mu$. Increasing the value of $\mu$ does have diminishing returns. 
We see that as $\mu$ increases the error approaches that of the scheme labeled $\mu \to \infty$, given in \cite{Hayden_Olson_Titi_2011}. We note that while this method does appear to be the most performant, it is not feasible to implement in practice, as it requires particular types of interpolants, such as projection onto low order Fourier modes. In contrast, the AOT algorithm allows for general classes of interpolants.

Additionally, in \cref{fig:3D error} one can the see the performance of our algorithm in the short term. We observe that, as predicted in the analysis, we see exponential decay in the $L^2$ error on all time-steps on which the AOT feedback-control term is applied, that is the time-step immediately after new observational data is measured. This decay is subsequently followed by exponential growth of the error, however with suitably strong $\mu$ the exponential decay dwarfs the growth leading to exponentially fast convergence in time.

\begin{figure}[htb!]
\centering
	\includegraphics[width = \textwidth]{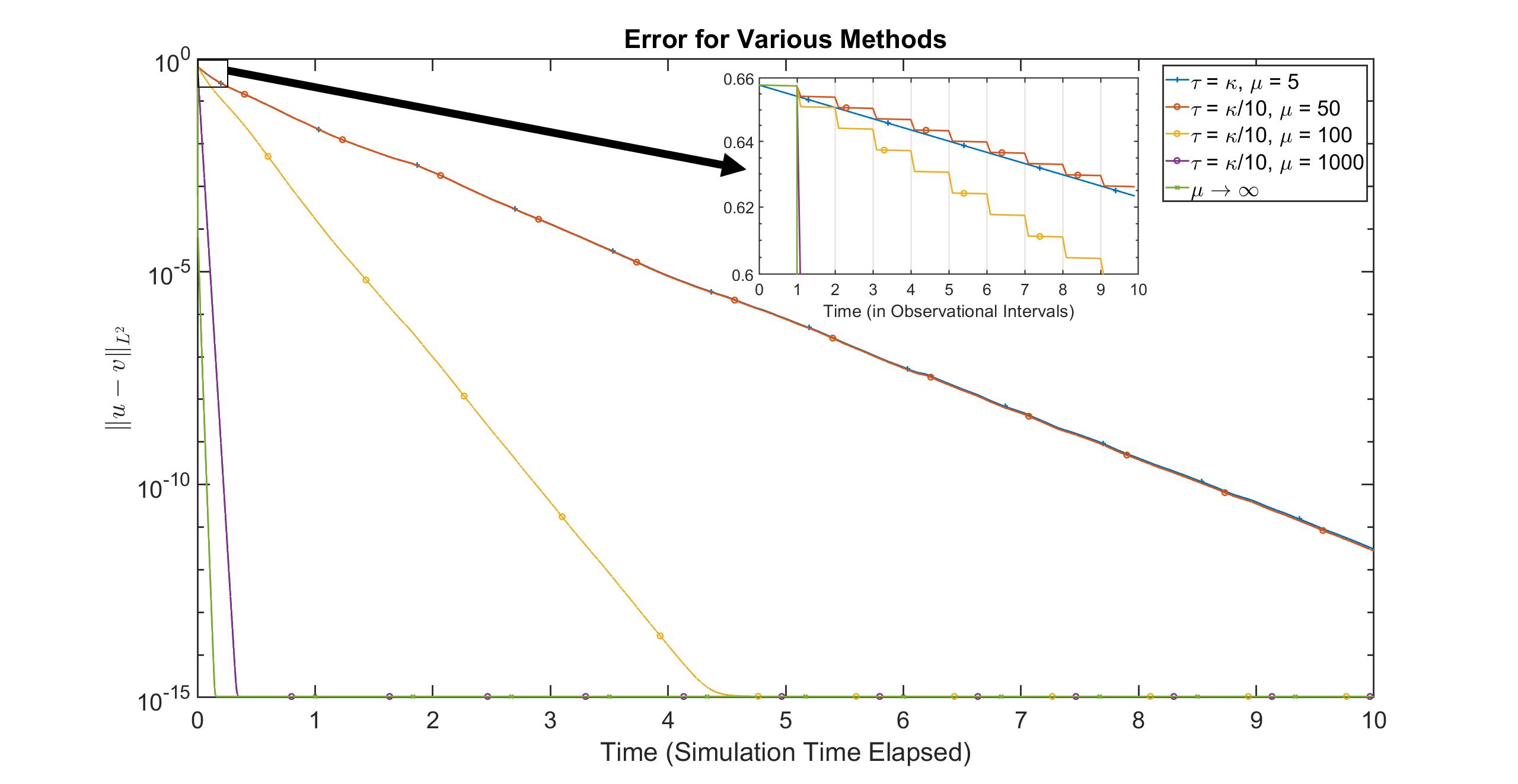}
	\caption{Log-linear plot of $L^2$ error for sparse-in-time observations using different $\tau$ values, with $\kappa = 0.001$ and $\Delta t = 0.0001$ with spatial resolution $512^3$. Inset: A zoom-in of the errors on time interval $[0,0.1]$ (box not drawn to scale) showing the stair-stepping nature of the error decay.}
 \label{fig:3D error}
\end{figure}

\begin{figure}
    \centering    
    \includegraphics[width=.25\textwidth]{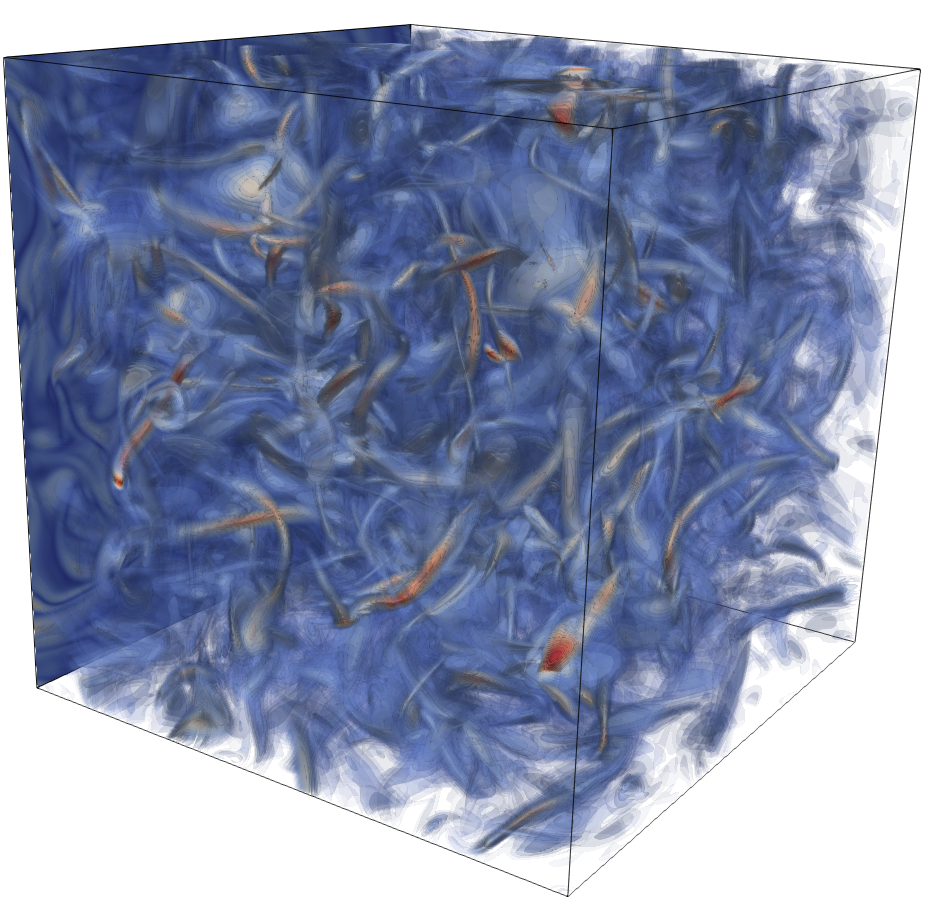}    
    \includegraphics[width=.25\textwidth]{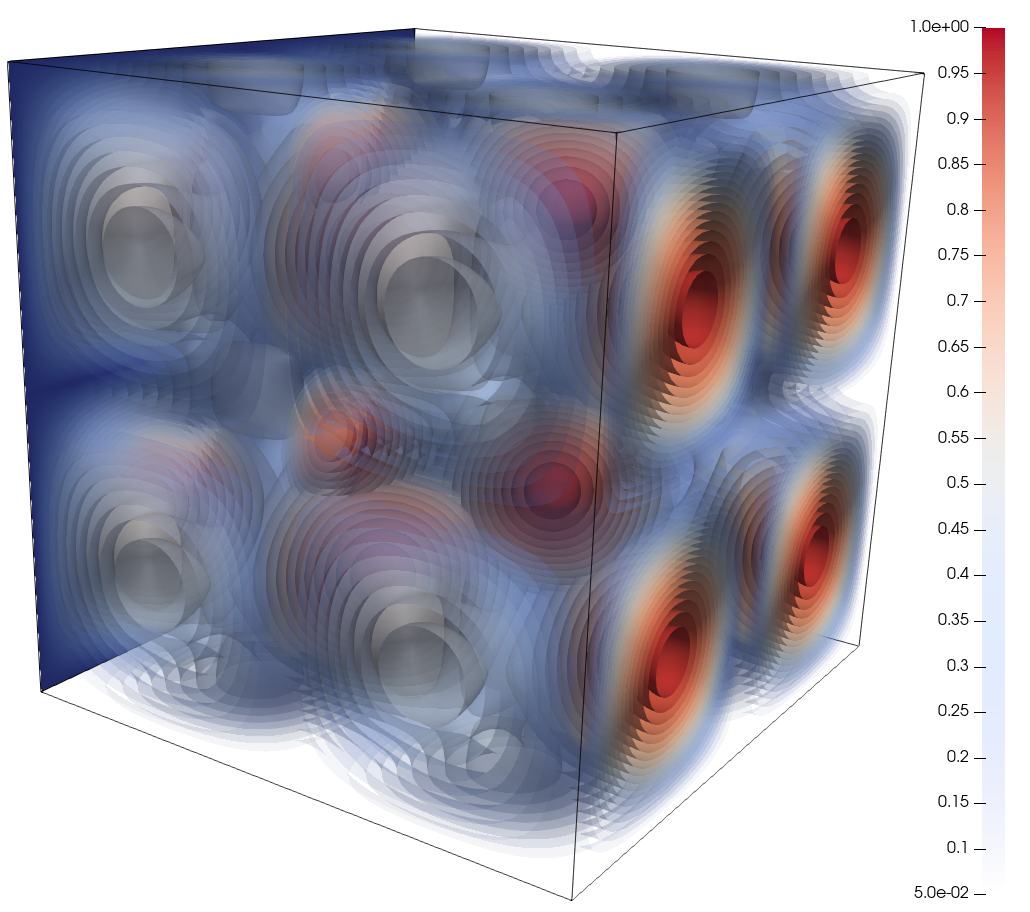}    
    \includegraphics[width=.25\textwidth]{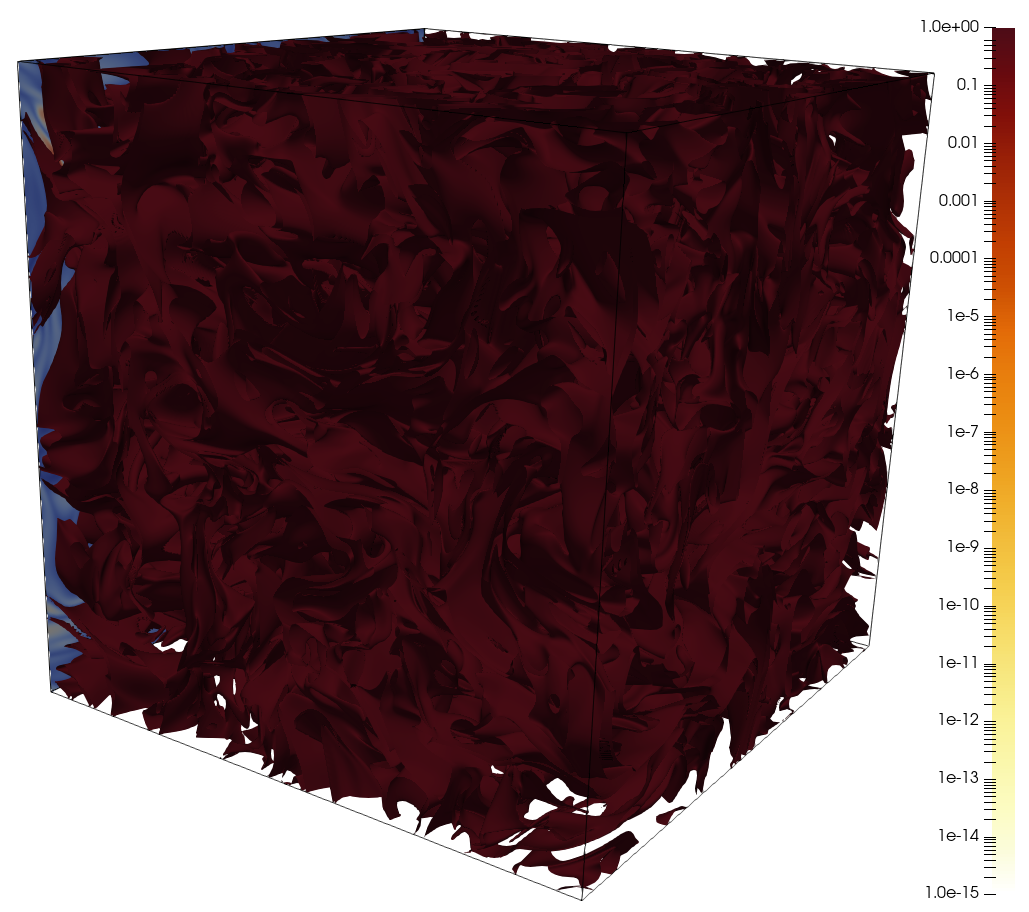}    
    \includegraphics[width=.25\textwidth]{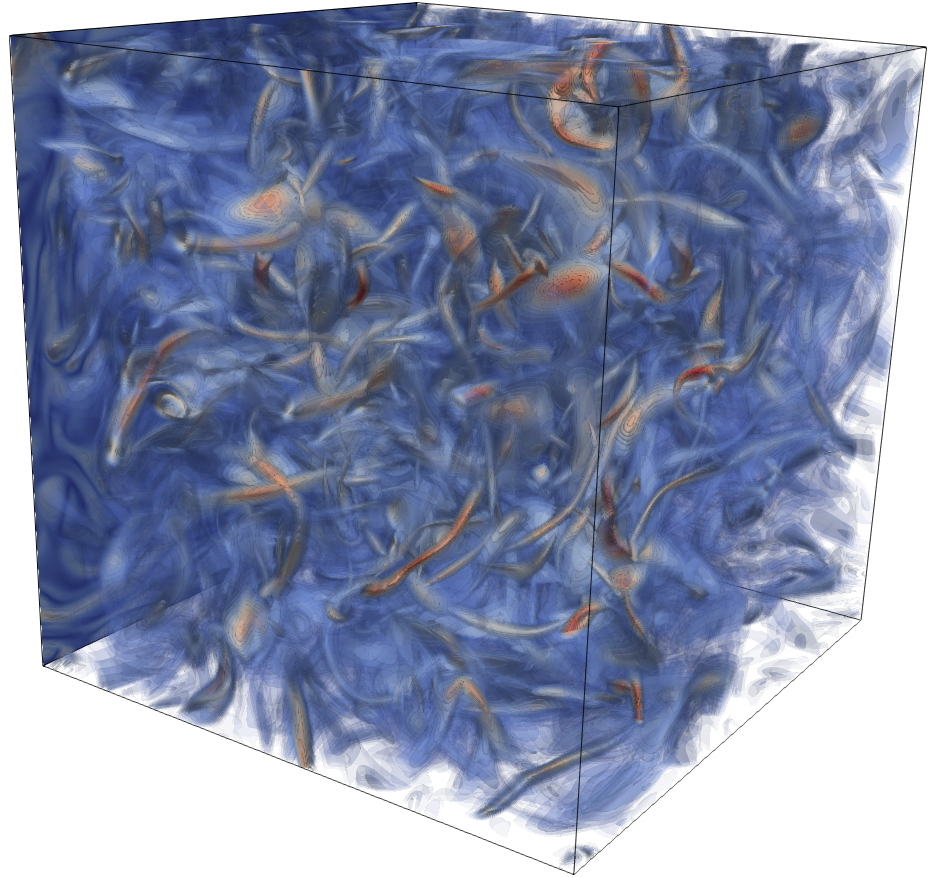}    
    \includegraphics[width=.25\textwidth]{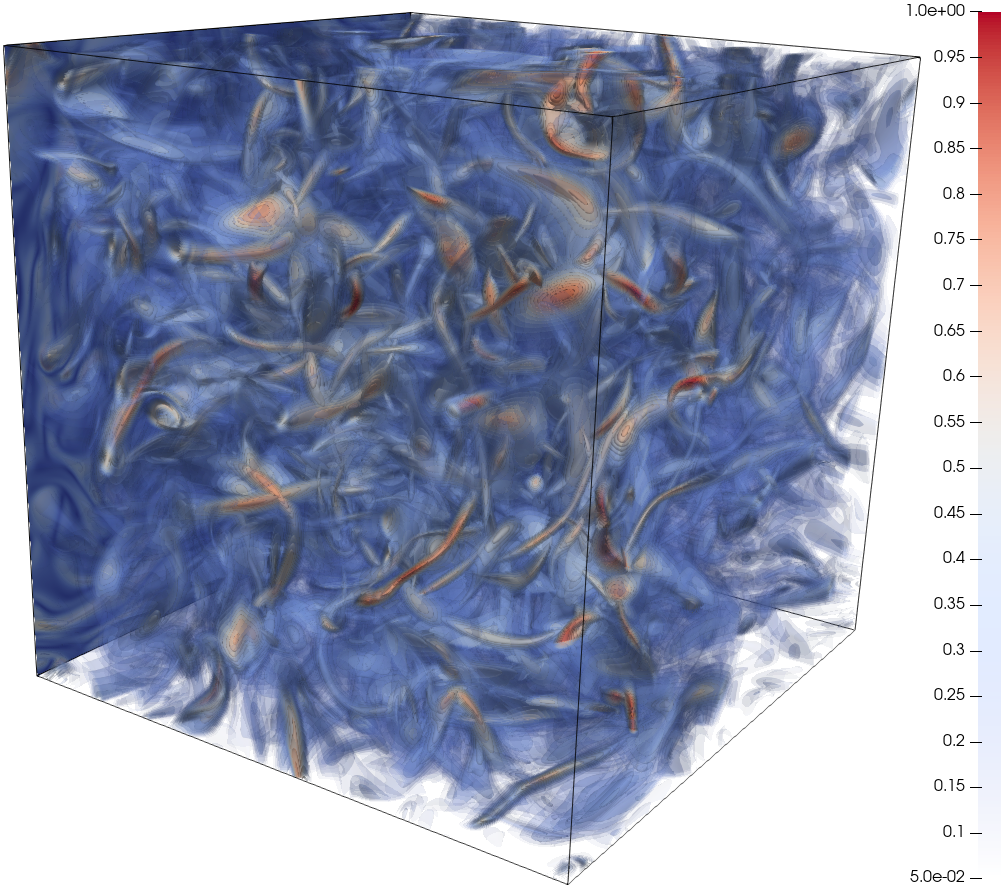}    
    \includegraphics[width=.25\textwidth]{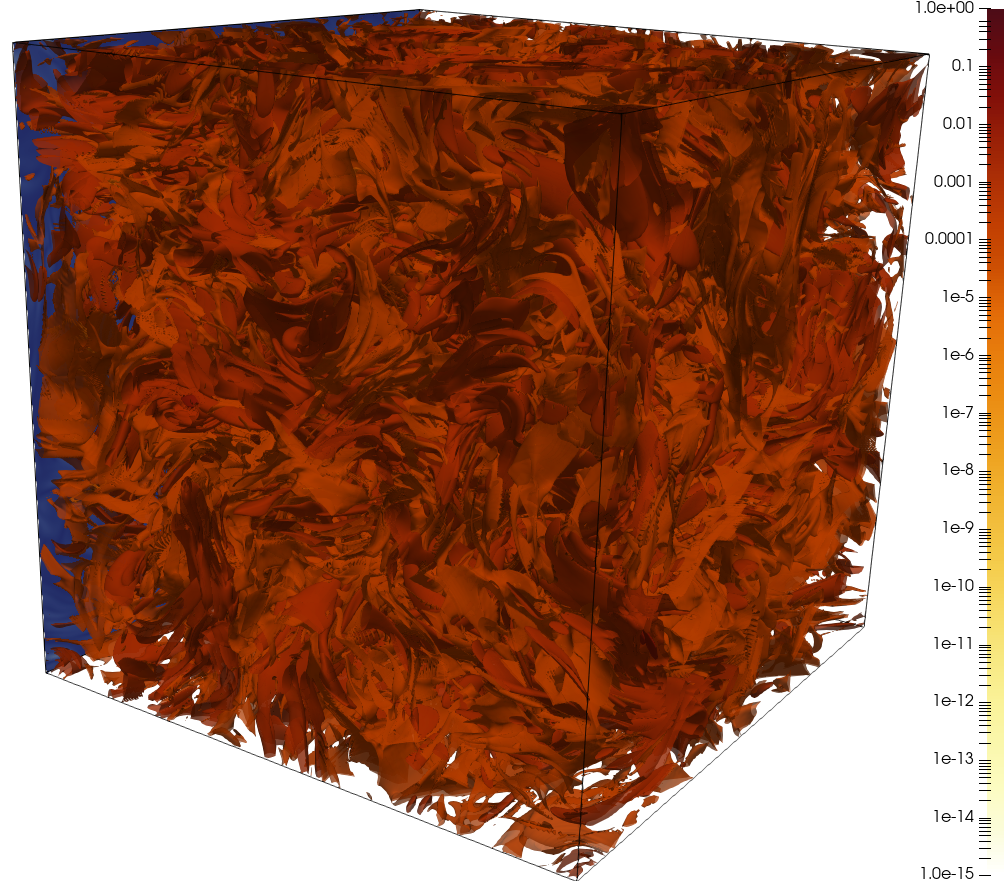}    
    \includegraphics[width=.25\textwidth]{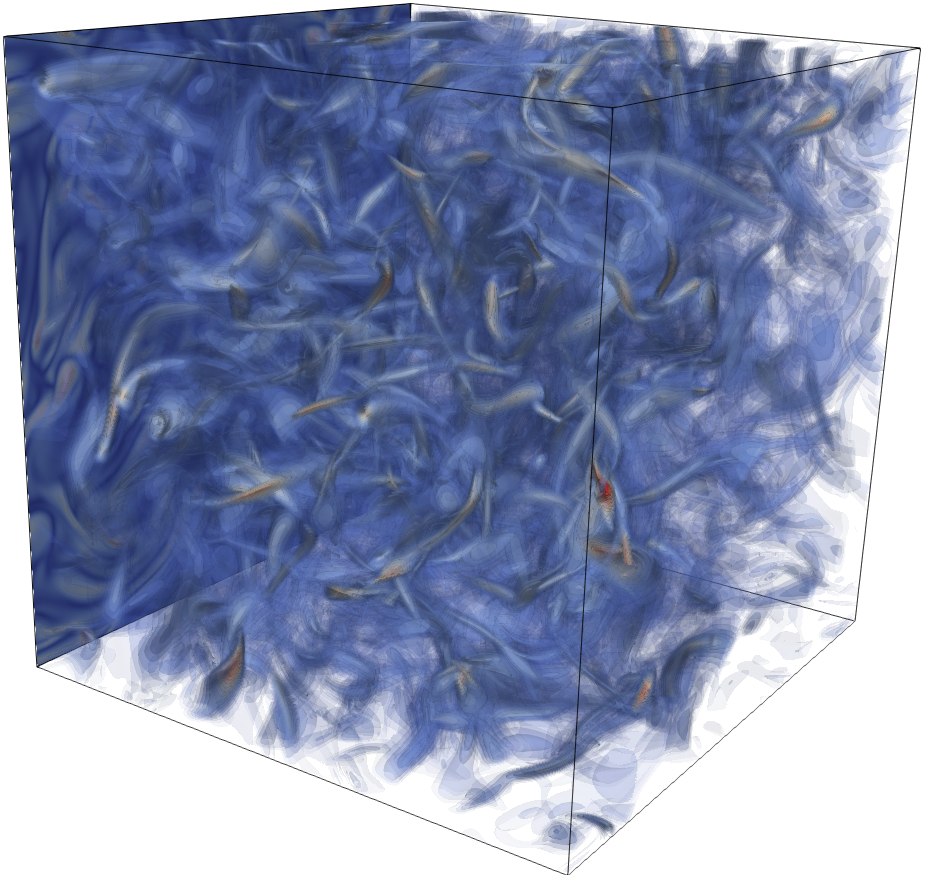}    
    \includegraphics[width=.25\textwidth]{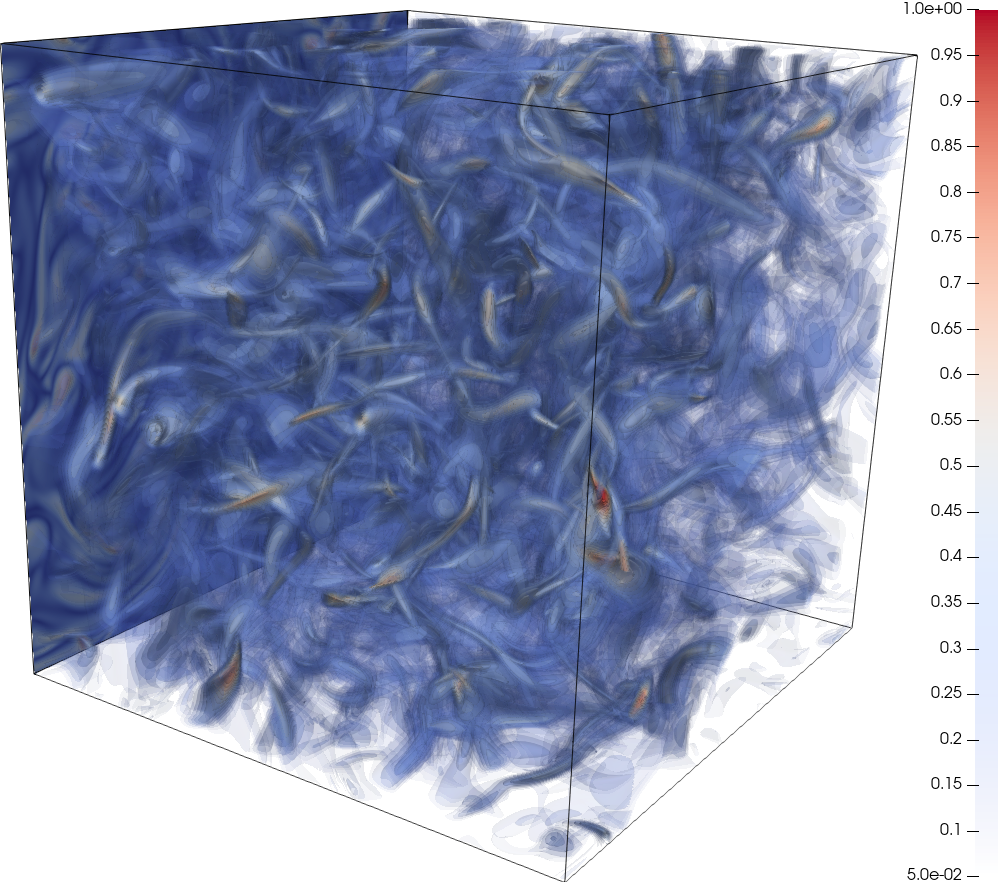}    
    \includegraphics[width=.25\textwidth]{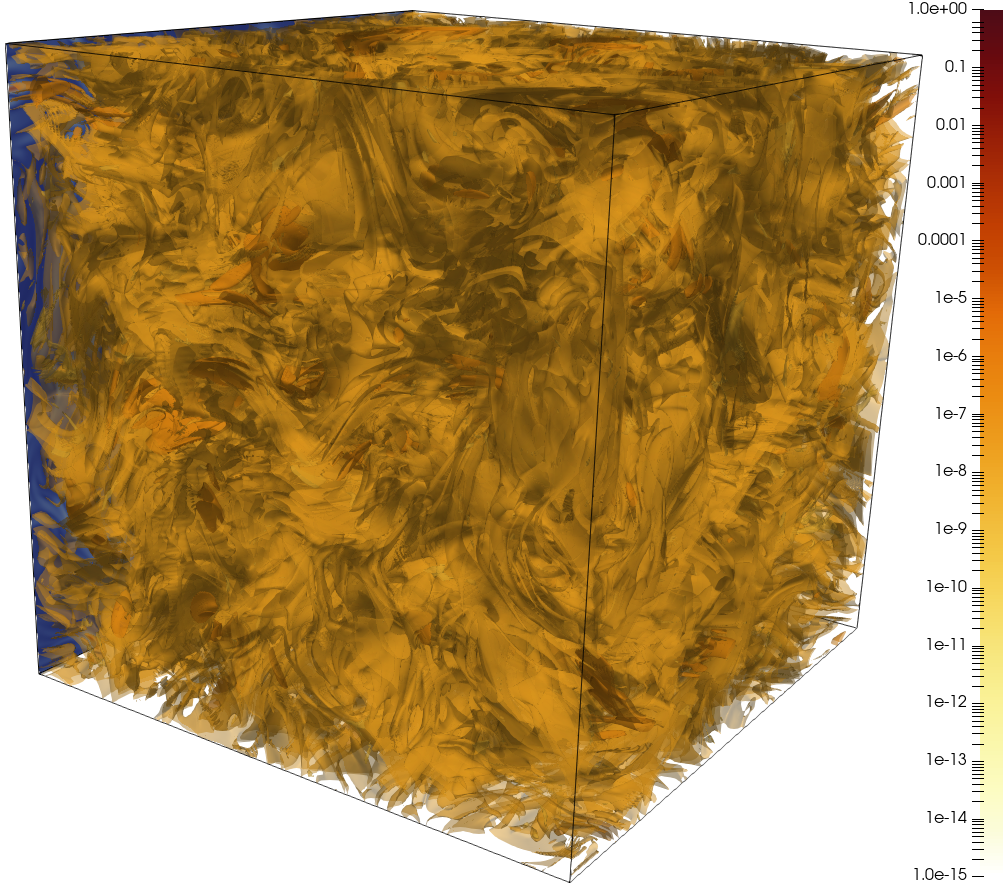}
    \includegraphics[width=.25\textwidth]{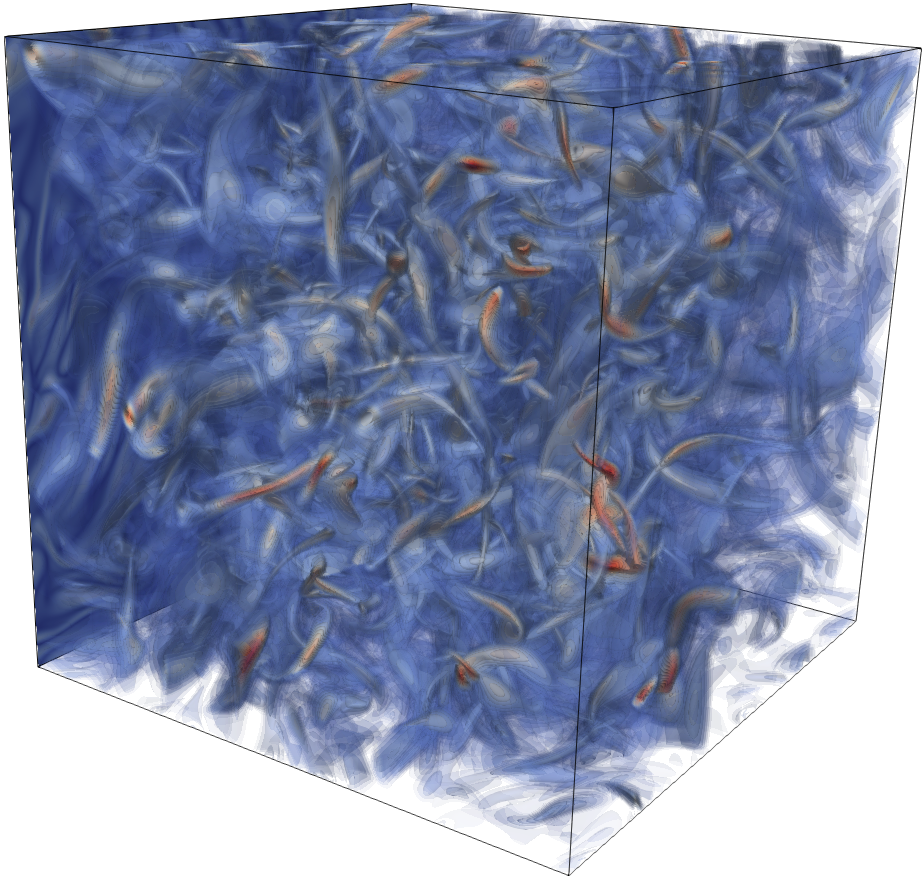}    
    \includegraphics[width=.25\textwidth]{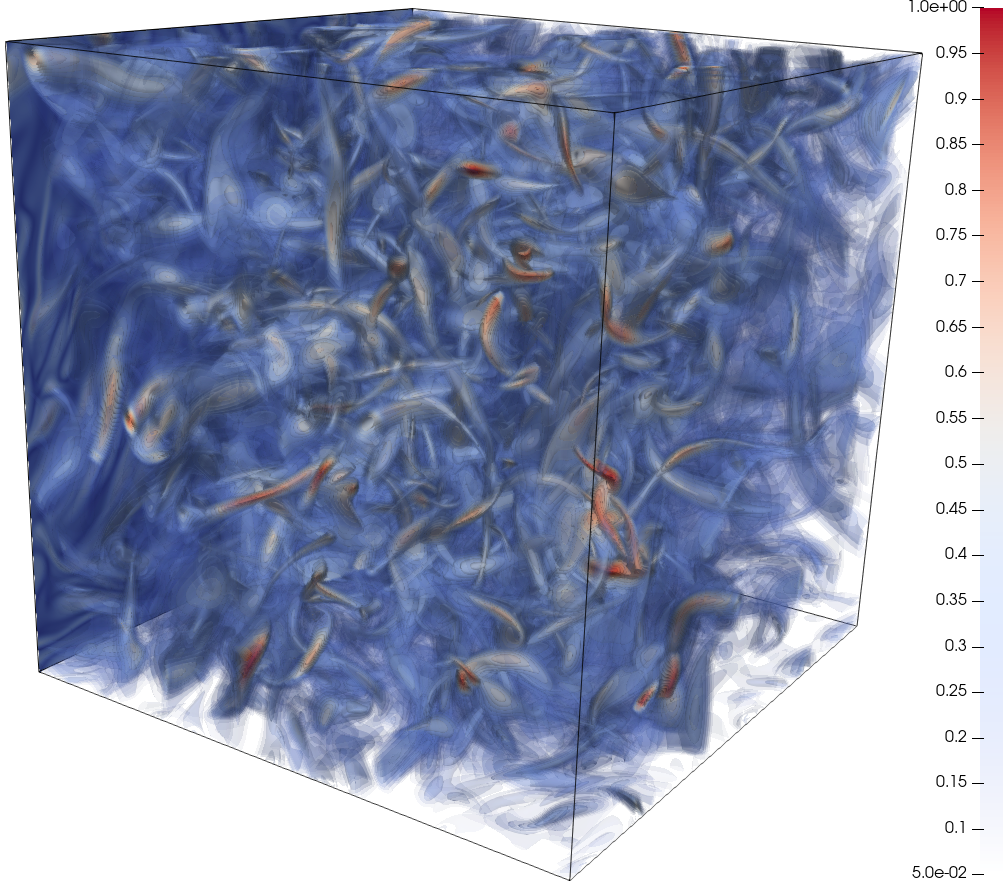}    
    \includegraphics[width=.25\textwidth]{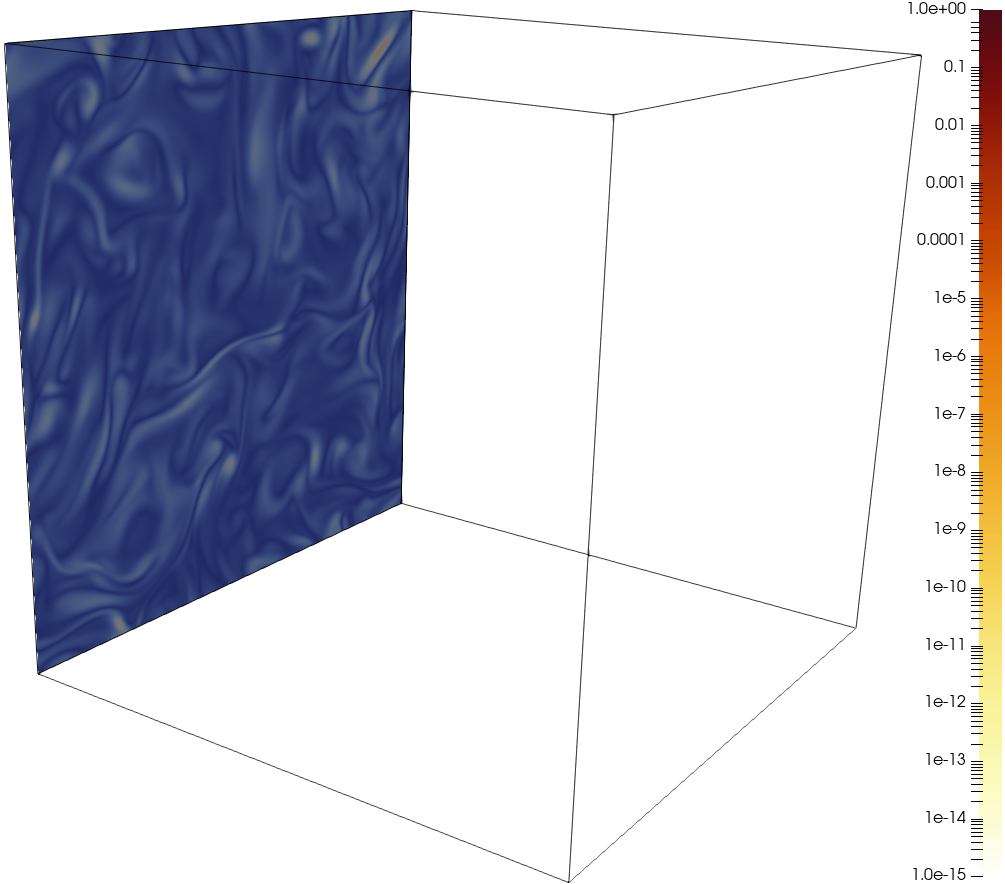}    
    \caption{Vortex tubes of $u$ and $v$ (left and center columns, respectively) shown colored according to magnitude of normalized vorticity with the difference in vorticities (right column) colored based on the magnitude. Each row of this figure is a snapshot of the vorticity magnitude taken at times $t  = 0.001, 0.11, 0.21,$ and $0.31$. For this simulation we used $\tau = \kappa/10$ with $\mu = 1000$.}
    \label{fig:vortex tubes}
\end{figure}

\FloatBarrier

\section{Conclusions}
\noindent
We have seen in this work that not only is it analytically justified\footnote{in the sense that, at least in 2D, global well-posedness holds and exponentially fast convergence to the true solution still occurs} to apply the AOT control over a ``data assimilation window'' which is smaller than the observational time interval, but we have seen computationally that such a scheme can dramatically improve the convergence rate at no additional cost.  The reason for this is straight-forward: by limiting the data assimilation window, one is able to use a significantly larger nudging parameter $\mu$.  The result is that the simulated solution is quickly and forcefully driven toward the observational data, but this forcing is halted swiftly, avoiding instability and allowing the solution time to relax toward the true physics of the system.  In addition, with this fast, furious driving of the solution, the solution is no longer nudged toward a significantly older state of the system. 
We also introduced a new algorithm which allows for linear extrapolation in time of the observed data.

In summary, we find that convergence rates can be greatly accelerated by \textit{(i)} dealing with incoming observations on a very short time interval immediately after they arrive, \textit{(ii)} giving them high priority (large $\mu$) and \textit{(iii)} ceasing to use the observations as soon as possible (to avoid instabilities and to avoid forcing the solution toward data that has become stale).

One may wonder about the limiting case of sending the data assimilation window to zero while sending the nudging strength $\mu\rightarrow\infty$ in some appropriate sense.  In essence, the ``best'' way to do this would be to simply replace the low Fourier modes by the observed Fourier modes (as in \cite{Hayden_Olson_Titi_2011,Celik_Olson_Titi_2019}), but this is not possible in real-world simulations, and hence we view such a replacement strategy as an idealization.  In contrast, while the method we propose here is slightly slower than the idealized replacement method (as we demonstrate in Figure \ref{fig:3D error}, though it is still an order of magnitude faster than previous methods), it is straight-forwardly implementable in a wide variety of real-world applications.

\section*{Acknowledgments}
\noindent
Author A.L. would like to thank the Isaac Newton Institute for Mathematical Sciences, Cambridge, for support and warm hospitality during the programme ``Mathematical aspects of turbulence: where do we stand?'' where work on this manuscript was undertaken. This work was supported by EPSRC grant no. EP/R014604/1. 
Author C.V. would like to thank Los Alamos National Laboratory for kind hospitality while this work was being completed.  
The research of A.L. was supported in part by NSF grants DMS-2206762 and CMMI-1953346.  
The research of C.V. was supported in part by the NSF GRFP grant DMS-1610400.

\bibliographystyle{abbrv}

\end{document}